\documentclass[11pt]{article}
\usepackage[centertags]{amsmath}
\usepackage{amsfonts}
\usepackage{amssymb}
\usepackage{amsthm}
\usepackage{mathrsfs}
\usepackage{graphicx}
\usepackage{psfrag}

\usepackage{hyperref}

\usepackage{cleveref}

\usepackage{color}

\usepackage{authblk}

\usepackage{tikz}

\usepackage{enumerate}
\usepackage{enumitem}

\usepackage{xspace}

\usepackage{float}

\usepackage{afterpage}

\providecommand{\keywords}[1]{\textbf{\textit{Keywords---}} #1}

\newcommand{\mo}{\mathfrak M}
\newcommand{\Tn}{{\cal T}(n)}
\newcommand{\mT}{{\mathfrak T}}

\newcommand{\U}{{\mathcal{U}}}

\newcommand{\Pcal}{{\mathcal{P}}}

\newcommand{\F}{{\mathfrak{F}}}
\newcommand{\M}{{\mathfrak{M}}}
\newcommand{\mon}{\textsf{MR}\xspace}
\newcommand{\N}{{\mathfrak{N}}}
\newcommand{\f}[1]{\mathfrak{#1}}

\newcommand{\NNIL}{{\sf{NNIL}}\xspace}

\newcommand{\MR}{\textsf{MR}\xspace}

\theoremstyle{definition}\newtheorem{theorem}{Theorem}[section]
\theoremstyle{definition}\newtheorem{definition}[theorem]{Definition}

\theoremstyle{definition}\newtheorem{corollary}[theorem]{Corollary}
\theoremstyle{definition}
\theoremstyle{definition}\newtheorem{lemma}[theorem]{Lemma}
\theoremstyle{definition}
\theoremstyle{definition}\newtheorem{proposition}[theorem]{Proposition}
\theoremstyle{definition}\newtheorem{fact}[theorem]{Fact}

\newenvironment{claim}[1][]{%
\par\addvspace{6pt}
\noindent
\textbf{Claim#1:}
\noindent
}
{\par\addvspace{4pt}}

\newenvironment{proofclaim}[1][]{%
\par
\noindent
\emph{Proof of Claim#1.}
\noindent
}
{\hfill$\dashv$\par\addvspace{6pt}}

\newcommand{\todod}[1]{}
\newcommand{\todof}[1]{}
\newcommand{\todoj}[1]{}

\begin{document}

\title{\NNIL-formulas revisited:\\
\Large{universal models and finite model property}}
\author[$*$]{Julia Ilin}
\author[$*$]{Dick de Jongh}
\author[$\dagger$]{Fan Yang}
\affil[$*$]{\small Institute of Logic, Language and Computation, University of Amsterdam. P.O. Box 94242, 1090 GE Amsterdam, The Netherlands, email:  ilin.juli@gmail.com, d.h.j.dejongh@uva.nl}
\affil[$\dagger$]{\small Department of Mathematics and Statistics, University of Helsinki, PL 68 (Pietari Kalmin katu 5), 00014 University of Helsinki, Finland, email: fan.yang.c@gmail.com}
\date{}

\maketitle\maketitle

\begin{abstract}
\NNIL-formulas, introduced by Visser  in 1983-1984 in a study of $\Sigma_1$-subsitutions in Heyting Arithmetic, are intuitionistic propositional formulas that does not allow nesting of implication to the left. The first 
results about these formulas were obtained in a paper of 1995 by Visser et al. 
\todof{$\leftarrow$I changed ``main results" to ``first results"}
In particular, it was shown that \NNIL-formulas are exactly the formulas preserved under taking submodels of Kripke models. Recently Bezhanishvili and de Jongh observed that \NNIL-formulas are also reflected by color-preserving monotonic maps of Kripke models. \todof{I added ``color-preserving"} In the present paper,
we first show how this observation leads to the conclusion that NNIL-formulas are preserved by arbitrary substructures not necessarily satisfying the topo-subframe condition.\todod{added sentence about substructures} Then we apply it to construct universal models for \NNIL.
It follows from the properties of these universal models that \NNIL-formulas are also exactly the formulas that are reflected by color-preserving monotonic maps. By using the method developed in constructing the universal models, we give a new direct proof that the logics axiomatized by \NNIL-axioms have the finite model property.


\end{abstract}

\keywords{Intuitionistic logic, universal model, finite model property, subframe formulas, monotonic maps.}



\section{Introduction}

\NNIL-formulas are formulas with \emph{n}o \emph{n}esting of \emph{i}mplications to the \emph{l}eft. These formulas are very expressive but considerably easier to handle and less complex than 
the class of all formulas in the language of the intuitionistic propositional calculus {\sf{IPC}}, as seen e.g., in the connection with infon logics \cite{ContGur13} and in the fact the class of \NNIL-formulas is \textit{locally finite}, i.e., there are only finitely many non-equivalent \NNIL-formulas in $n$ variables for every $n$. \todof{here \textbf{IS} the definition, perhaps invisible though...}
The study of these formulas was introduced by Visser in 1983-1984 when working on $\Sigma_1$-substitutions of propositional formulas in Heyting Arithmetic~\cite{V85}, an investigation that was continued in cooperation with de Jongh~\cite{dJV93}. Research on the purely propositional properties of these formulas was undertaken in~\cite{VNNIL}. The main tool in that article was the use of \textit{subsimulations}, a type of simulation that leaves the valuation of \NNIL-formulas intact.
It is shown that \NNIL-formulas are (up to provable equivalence) exactly the ones that are  preserved under taking submodels of Kripke models. 
In~\cite{Nic06} and in~\cite{Fan08} it was remarked that this implies that \NNIL-formulas are also preserved under taking subframes.  They axiomatize so-called subframe logics. Modal subframe logics were first introduced
by Fine~\cite{Fin85}, and intermediate subframe logics
were defined by Zakharyaschev~\cite{Zak89} (see also \cite[\S 11.3]{CZ97}), who also proved the finite model property of these logics.

It may be thought of as surprising that \NNIL-formulas axiomatize subframe logics, because for example
\cite{BG07} used $[\wedge,\to]$-formulas (i.e., formulas that have $\wedge$ and $\to$ only as connectives) to axiomatize these logics and to prove their finite model property. The $[\wedge,\to]$-formulas, though forming a locally finite class as well, are very different in character from \NNIL-formulas.\todod{I added the word class} 
To obtain \NNIL-axiomatizations, in~\cite{Nic06} (see also~\cite{BdJ15}), for each finite rooted frame $\F$, a \NNIL-formula is constructed from a model $\M$ on that frame that fails on a descriptive frame $\f{G}$ iff $\F$ is a p-morphic image of a subframe of $\f{G}$, as one calls it, a \textit{refutation criterion}. Using ideas from~\cite{BdJ15} in this paper we show that  monotonic maps can be used to describe the behavior of such formulas if the maps satisfy an additional condition: 
\textit{color-consistency}. The formulas fail on a descriptive frame $\f{G}$ iff the unraveling of $\M$ to a tree can be mapped into $\f{G}$ by a color-consistent monotonic function. This is shown to lead to the conclusion that the frame classes of intermediate subframe logics, because they are axiomatized by \NNIL-formulas, are closed under subframes in general, even of they do not satisfy the topo-subframe condition~(\Cref{cor:substructures}). This means that the class of intermediate subframe logics is at least in some ways essentially less complicated than the class of modal subframe logics. This is certainly connected to the fact shown in~\cite{VNNIL} that \NNIL-formulas are the ones of which the standard correspondent as a first order formula is equivalent to a universal one, even if it may not yet be quite clear how.\todod{Added a few sentences about the interest of \NNIL-formulas. Too wild?}  

We will further exploit the obtained refutation criterion via  color-consis\-tent monotonic maps in the present paper in two ways.
A first central result is a full description of the $n$-universal model $\Tn$ for \NNIL-formulas. This will complete the work started in~\cite{Fan08}. In fact it turns out that $\Tn$ is also the universal model for \textit{monotonically reflective} formulas ({\sf MR}-formulas), the class of formulas whose validity is reflected (or backwards preserved) by monotonic maps. \NNIL-formulas are easily seen to be \mon-formulas, and it follows essentially from~\cite{VNNIL} that  \NNIL-formulas are also exactly the ones reflected by monotonic maps. We will give an alternative proof of this result as a corollary of our universal model construction. 
The facts that logics axiomatized by \NNIL-formulas have the finite model property and are canonical will also be a simple consequence of our investigations into color-consistent monotonic maps.
As logics axiomatized by \NNIL-formulas correspond to subframe logics, these results are not new. What is new is, as stated above, that the frames of intermediate subframe logics defined by \NNIL-formulas, hence all intermediate subframe logics, are closed under arbitrary substructures not necessarily satisfying the topo-subframe condition.\todod{Deleted part. Enough?}

The paper is organized as follows: Section 2 contains the preliminaries of the paper. In Section 3 we prove the refutation criterion for \NNIL-formulas via (color-consistent) monotonic maps. Section 4 constructs universal models for \NNIL-formulas, and Section 5 proves the finite model property for logics axiomatized by \NNIL-formulas. We conclude in Section 6 by mentioning some  open problems. 

This article is largely based on~\cite{Jul19}. The proof of the finite model property (Theorem~\ref{fmp}) has already appeared in~\cite{IdeJY17}, a paper dedicated to Albert Visser.




\section{Preliminaries}


In this section, we recall briefly the most relevant terminologies and notations of this paper; for a more extensive treatment we refer the reader to \cite{BdJ15} and \cite{Nic06,CZ97,VanD}.
We fix a set {\sf Prop} of propositional variables $p$. Formulas of intuitionistic propositional logic are defined by the grammar:
\[\varphi:=\bot\mid p\mid \varphi\wedge\varphi\mid\varphi\vee\varphi\mid\varphi\to\varphi\]
As usual, we write $\neg\varphi$ for $\varphi\to\bot$. We also adopt the usual connective precedence that $\neg$ has higher precedence than $\wedge$ and $\vee$, which have precedence than $\to$. For instance $p\vee q\to \neg r\wedge s$ is read as $(p\vee q)\to ((\neg r)\wedge s)$. \todof{Added, because I got confused sometimes...} We consider the usual {\em intuitionistic propositional calculus} {\sf IPC}, and write $\vdash_{{\sf IPC}}\varphi$ or simply $\vdash\varphi$ if $\varphi$ is a theorem of {\sf IPC}. An {\em intermediate logic} $L$ is a set of formulas containing {\sf IPC} closed under modus ponens and substitution.


We have the usual Kripke semantics with intuitionistic (Kripke) frames $\f{F}=(W,R)$ and  models $\f{M}=(\f{F},V)$, where $W$ is a nonempty set of {\em worlds} (also called {\em points} or {\em nodes}), $R$ is a partial order on $W$, and  $V$ 
is a {\em persistent valuation} (i.e., $w\in V(p)$ and $wRu$ imply $u\in V(p)$). For any world $w$, define $R(w)=\{u\in W\mid wRu\}$ and $R^{-1}(w)=\{u\in W\mid uRw\}$. A set $U\subseteq W$ is said to be {\em upward closed} (or an {\em upset}) if $w\in U$ and $wRu$ imply $u\in U$. Denote by $Up(W)$ the set of all upsets of $W$. An intuitionistic general frame $\f{F}=(W,R,\Pcal)$ is an intuitionistic Kripke frame $(W,R)$ equipped with a set $\Pcal$ of upsets containing $\emptyset$ and $W$, and  closed under $\cup,\cap$ and the binary operator $\to$ on $\wp(W)$ defined as
\[U_0\to U_1:=\{w\in W\mid \forall v: wRv\text{ and }v\in U_0\text{ imply }v\in U_1\}.\]
An intuitionistic  general frame $\F$ is called {\em refined} if $\neg wRv$ implies that there is $U\in\Pcal$ such that $w\in U$ and $v\notin U$; $\F$ is called {\em compact} if for every $\mathcal{X}\subseteq \mathcal{P}\cup \{W\setminus U\mid U\in \Pcal\}$, if $\mathcal{X}$ has the {\em finite intersection property} (i.e., $\bigcap\mathcal{X}_0\neq\emptyset$ for every finite subset $\mathcal{X}_0\subseteq\mathcal{X}$), then $\bigcap\mathcal{X}\neq\emptyset$. An intuitionistic descriptive frame $\f{F}=(W,R,\Pcal)$ is an intuitionistic general frame that is both refined and compact. 
In particular, a Kripke frame $\F$ can be seen as a general frame $(\F,\Pcal)$ with $\Pcal=Up(W)$, which is clearly refined. A finite Kripke frame is also compact and thus descriptive. \todof{I changed it this way}\todod{Agreed} A descriptive model $\f{M}=(\F,V)$ is a descriptive frame $\F$ with a persistent valuation satisfying in addition $V(p) \in \mathcal P$ for all $p\in {\sf Prop}$.
Formulas in the language of {\sf IPC} are evaluated at a world $w$ in a (Kripke or descriptive) model $\mathfrak M$ recursively as follows: 
\begin{alignat*}{2}
& \mathfrak M, w \not\models \bot  \quad &&\text{always;}\\
& \mathfrak M, w \models p  \quad &&\text{iff} \quad w \in V(p);\\
&  \mathfrak M,  w \models \varphi \wedge \psi  \quad  &&\text{iff}   \quad \mathfrak M,  w \models \varphi \text{ and }  \mathfrak M, w \models \psi;  \\
& \mathfrak M, w\models \varphi \vee \psi  \quad &&\text{iff}  \quad  \mathfrak M, w \models \varphi \text{ or } \mathfrak M, w \models \psi;   \\
& \mathfrak M, w \models \varphi \rightarrow \psi  \quad  &&\text{iff}  \quad  \text{for every } u  \text{ with } w R u,~\mathfrak M, u \models \varphi \text{ implies } \mathfrak M, u \models \psi.  
\end{alignat*}
Define $V(\varphi)=\{w\in W\mid \mathfrak M, w\models\varphi\}$, and write $\mathfrak M \models \varphi$ if $V(\varphi)=W$. For a (descriptive) frame $\mathfrak F$ we write $\mathfrak F \models \varphi$ if $\mathfrak M \models \varphi$ for every model $\mathfrak M$ on $\mathfrak F$. 

In this paper, we often consider {\sf IPC} with respect to $n$ fixed propositional variables $p_1,\dots, p_n$. By an $n$-formula we mean a formula whose propositional variables are among the $n$ fixed ones only. Similarly, an $n$-model is a model $(\F,V)$ with the valuation $V$ restricted to the set consisting of the $n$ fixed propositional variables. We define the {\em color}  of a point $w$ in an $n$-model, denoted $col(w)$, as the sequence $i_1\dots i_n$ with $i_k=1$ if $p_i$ is true in $w$, and $0$ otherwise.
We write $i_1\dots i_n\leq j_1\dots j_n\ $ iff  $\ i_k\leq j_k$ for each $k=1,\dots, n$, and $i_1\dots i_n < j_1\dots j_n$ if $i_1\dots i_n \leq j_1\dots
j_n$ and $i_1\dots i_n \neq j_1\dots j_n$.

A  (Kripke or descriptive) frame $\mathfrak F'= (W', R',\mathcal{P}')$  is said to be a \emph{subframe} of a (Kripke or descriptive) frame $\mathfrak F=(W, R,\mathcal{P})$ iff $W' \subseteq W$, $R' = R\,{\upharpoonright}\, W'$, where in the case of a descriptive frame an additional {\em topo-subframe} condition needs to be satisfied, namely $W'\setminus U\in\Pcal'$ implies $W\setminus R^{-1}(U)\in\Pcal$ for all $U\subseteq W'$ (see e.g., \cite{BdJ15} for detailed discussion). \todof{\cite{BdJ15} actually refers to two other papers} We will also study subframes of descriptive frames which need not satisfy the topo-subframe condition. We call these {\em substructures}. Similarly, a model $\mathfrak M'=(\F', V' )$ is a \emph{submodel} of $\mathfrak M=(\F, V)$ iff $\F'$ is a subframe of $\F$ and $V'(p) = V(p) \cap W$ for each  $p$.
We write $\f{M}_w$
for the {\it submodel of $\f{M}$ generated by $w$}, that is, the submodel with $R(w)$ as the domain; similarly for generated subframes $\F_w$. It is easy to see that
\(\mathfrak M_w \models \varphi \text{ iff }   \mathfrak M, w \models \varphi \) for any formula $\varphi$.
If $\f{M}=\f{M}_r$ for some node $r$ (called the {\em root}), then $\M$ is said to be {\em rooted}; similarly for rooted frames. Most frames and models we consider in this paper will be rooted. 


If $wRu$, then we call $u$ a {\em successor} of $w$. If in addition $w\neq u$, then $u$ is called a {\em proper successor} of $w$, in symbols $wR^+u$. If $wR^+u$ and, for no $v$, $wR^+vRu$, $u$ is called an immediate successor of $w$. Points $w$ with no proper successor are called \emph{maximal}, that is, $wRu$ implies $w=u$.
 The {\em depth} of a point $w$ in a finite  model $\mo$ is defined as the maximal length $k$ of a chain $w=w_1R^+\dots R^+w_k$ in $\mo$, in particular, the depth of a maximal point is $1$. The \emph{depth of a finite model} is the maximal depth of the points in the model. 

A model  $(W,R,V)$ is called {\em tree-like} (or simply a {\em tree}) if for all $w\in W$, $R^{-1}(w)$ is  finite and linearly ordered; similarly for tree-like frames.
Recall that the \emph{standard unraveling}  of a rooted (infinite) model $\M=(W,R,V)$ with root $r$ is a tree-like model $\M_{\textsf{t}} =(W_{\textsf{t}}, R_{\textsf{t}}, V_{\textsf{t}})$ defined as 
\begin{itemize}
\item $W_{\textsf{t}}=\{\langle r, w_1,\dots,w_k\rangle\mid rRw_1R\dots Rw_k\}$,
\item $\sigma R_{\textsf{t}}\tau$ iff $\sigma$ is an initial segment of $\tau$,
\item $\langle r, w_1,\dots,w_k\rangle\,{\in}\, V_{\textsf{t}}(p)$ iff $w_k\in V(p)$.
\end{itemize}
For finite rooted models $\M$, the unraveling $\M_{\textsf{t}}$ is defined the same way except that $W_{\textsf{t}}$ is defined using immediate successorship instead of successorship $R$.
We identify the root $\langle r\rangle$ of the unraveled model $\M_{\textsf{t}}$ with the root $r$ of the original model $\M$, and write simply $r$ for $\langle r\rangle$. \todof{deleted the notation $\mathfrak{T}_{\M}$}



One central notion of our revisit to \NNIL-formulas is the notion of monotonic map. A \emph{monotonic map} between two frames $\f{G}=(W,R)$ and $\F=(W',R')$ is a function $f:W\to W'$  that preserves order, i.e., \todof{I swapped $\f{G}$ and $\F$ here, because $\F$ is usually paired with $\M$ and $\f{G}$ with $\N$, I think.}
\begin{description}
\item[(order preservation)] $wRu$ implies $f(w)R'f(u)$.
\end{description}
A {\em color-preserving monotonic map} $f:\N\to \M$, 
denoted as $\M\leq_f \N$, is a monotonic map between the two underlying frames that also preserves colors, i.e., \todof{changed to this, in the end}\todod{left 'between two models' out once, it made me doubt the direction} 
\begin{description}
\item[(color preservation)] $col(f(w))=col(w)$.
\end{description}
We write $\M\leq \N$ if there exists some color-preserving monotonic map $f$ such that $\M\leq_f \N$.
%
Note that  {\em functional subsimulations}, which played an important role in the previous study \cite{VNNIL} of \NNIL-formulas, are, in fact, color-preserving monotonic maps.  
Let us also recall that the familiar p-morphisms are 
color-preserving monotonic maps  $f$ between two models that also satisfy: 
\begin{description}
\item[(back condition)] $f(w)R'u'$ implies $\exists u\in W'(wRu$ and $f(u)=u')$.
\end{description}
As an example, the \emph{natural map} $\alpha$ between an unraveled model $\M_{\textsf{t}}$ and the original model $\M$, defined as $\alpha(\langle r,w_1,\dots,w_k\rangle)=w_k$, is a surjective p-morphism.  It is easy to see that the truth of a formula $\varphi$ is invariant under p-morphisms $f$ between two models $\N$ and $\M$, i.e.,
\[\M,f(w)\models\varphi\iff \N,w\models\varphi.\]

Let us now recall from \cite{VNNIL,Fan08} some basic facts about \NNIL-formulas, which are formulas in the language of {\sf IPC} with no nesting of implications to the left. For example,  $p\vee q\to (r\to s)$ and $p\to (q\to\bot)\vee(r\to s)$ are \NNIL-formulas, whereas $(p\to\bot)\to \bot$ and $(p\to q)\vee r\to p$ are not.   \NNIL-formulas form a  {\em locally finite} class of formulas, that is, for every natural number $n$, there are only finitely many non-equivalent \NNIL-formulas in $n$ propositional variables. For short, we may say somewhat improperly that \NNIL-formulas are locally finite. \todod{Alright?}\todof{I thought we have treated already this "locally finite" issue with the definition a sentence before. Adding one more sentence looks like an exaggeration...}\todod{The last sentence says something different} 
Since conjunctions and disjunctions in front of implication can be eliminated using the equivalences \todof{treated ``locally finite" and NF}
\[\vdash(\varphi\wedge\psi\to\chi)\leftrightarrow (\varphi\to(\psi\to\chi)\,\text{and}\vdash(\varphi\vee\psi\to\chi)\leftrightarrow ((\varphi\to\psi)\wedge(\varphi\to\chi)),\]
every \NNIL-formula can be brought into an equivalent \NNIL-formula in the following normal form: 

\begin{definition}
{\sf NNIL}-formulas in {\em normal form} are defined by the grammar:
\[\varphi:= \bot\mid p\mid\varphi\wedge\varphi\mid\varphi\vee\varphi\mid p\to\varphi\]
\end{definition}


The  approach of this paper is based on the observation made in \cite{BdJ15} that \NNIL-formulas are  reflected (or backwards preserved) by color-preserving monotonic maps. We recall this fact  in detail as follows.


\begin{lemma}\label{nnil}\cite{BdJ15}
Let $\varphi$ be a \NNIL-formula. For any two models $\N=(W,R,V)$ and $\mathfrak M=(W',R',V')$  such that $\M\leq_f \N$ for some color-preserving monotonic map $f:\N \to \M$, we have that for any $w \in W$,
\begin{equation}\label{eq:1}
\M, f(w)\models\varphi \Longrightarrow \N, w\models\varphi.
\end{equation}
In particular, if $\M\leq\N$ and $\M\models\varphi$, then $\N\models\varphi$.
\end{lemma}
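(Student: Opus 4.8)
The plan is to prove the statement by induction on the structure of $\varphi$, establishing the universally quantified claim that \eqref{eq:1} holds for \emph{every} $w \in W$ simultaneously. Carrying the quantifier over $w$ inside the induction hypothesis is essential: the implication case will require invoking the hypothesis not at $w$ itself but at its successors. Since every \NNIL-formula is provably equivalent to one in normal form and truth is invariant under provable equivalence, it is enough to treat formulas generated by the normal-form grammar $\varphi := \bot \mid p \mid \varphi\wedge\varphi \mid \varphi\vee\varphi \mid p\to\varphi$, in which every implication has an atomic antecedent.

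The base cases and the two non-implicational connectives are routine. For $\bot$ the implication \eqref{eq:1} holds vacuously, since $\M, f(w) \not\models \bot$. For an atom $p$, color preservation gives $col(w) = col(f(w))$, so $f(w) \in V'(p)$ forces $w \in V(p)$; in fact this yields the stronger biconditional $\M, f(w) \models p \iff \N, w \models p$, which will be reused below. The cases $\varphi \wedge \psi$ and $\varphi \vee \psi$ follow immediately by applying the induction hypothesis to the two components at the same point $w$.

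The implication case $p \to \psi$ is where the argument really lives, and where the \NNIL-restriction pays off. Assume $\M, f(w) \models p \to \psi$, and let $u$ be any point with $wRu$ and $\N, u \models p$; the goal is $\N, u \models \psi$. By order preservation, $wRu$ gives $f(w) R' f(u)$. The crucial move is transferring the truth of the antecedent \emph{from the source to the target}: since the antecedent is the atom $p$, the biconditional noted above turns $\N, u \models p$ into $\M, f(u) \models p$. Combining $f(w) R' f(u)$ with $\M, f(w) \models p \to \psi$ and $\M, f(u) \models p$ yields $\M, f(u) \models \psi$, and the induction hypothesis for $\psi$ applied at $u$ then delivers $\N, u \models \psi$. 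Hence $\N, w \models p \to \psi$.

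The main obstacle---and the whole reason the lemma is about \NNIL-formulas---is precisely this antecedent transfer in the implication step. A monotonic map only guarantees preservation of order and of atoms in the stated direction for compound formulas, so pushing a \emph{complex} antecedent from $\N$ to $\M$ would be unavailable; restricting (via the normal form) to atomic antecedents sidesteps this, because atoms are governed by the \emph{equality} of colors and hence transfer in both directions. It is worth stressing that only order preservation and color preservation are used: the forth condition of p-morphisms is never invoked, which is exactly the content the lemma is meant to isolate.
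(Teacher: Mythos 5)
Your proof is correct and follows essentially the same route as the paper's: induction on the normal form of $\varphi$, with the only non-trivial case being $p\to\psi$, handled by combining order preservation ($f(w)R'f(u)$) with the two-way transfer of the atomic antecedent via color preservation before invoking the induction hypothesis at the successor. Your additional commentary on why atomic antecedents are essential is a faithful elaboration of the same argument, not a different one.
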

\begin{proof}
The proof is a routine induction on $\varphi$. Assume $\varphi$ to be in normal form. Only the case $\varphi=p\to \psi$ is non-trivial. Suppose $\M, f(w) \models p \to\psi$ and $\N, u \models p$ for some $u$ with $wRu$. Since $f$ is monotonic and color-preserving, $f(w)R' f(u)$ and $\M, f(u) \models p$, thus $\M, f(u) \models\psi$. By the induction hypothesis, we obtain $\N, u\models\psi$, as required.
\end{proof}

The above lemma also gives rise to the class \MR (short for {\em monotonically reflective}) of formulas that are  
reflected by color-preserving monotonic maps.\footnote{The class {\sf MR} was called {\sf SR} (short for {\em stably reflective}) in~\cite{Jul19}. } Obviously we have $\NNIL\subseteq\MR$.

The identity map from a submodel $\f{N}$ of $\M$ to $\M$ itself is obviously a color-preserving monotonic map. 
Consequently, \NNIL-formulas $\varphi$ are preserved under submodels, that is, $\M\models\varphi$ implies $\f{N}\models\varphi$. It was shown in \cite{VNNIL} that the converse holds as well, namely, every formula  preserved under taking submodels is (equivalent to) a \NNIL-formula. From this it also follows that every \MR-formula (being preserved under taking submodels) is equivalent to some \NNIL-formula, and thus $\NNIL=\MR$. In this paper (in \Cref{cor:betaaxiomatization}) we provide a direct alternative proof of these results  by means of monotonic maps. 
\todof{Rephrased it this way}

\section{\NNIL-subframe formulas and monotonic maps on trees}


In this section we present a refutation criterion for \NNIL-subframe formulas via monotonic maps. 
\NNIL-subframe formulas were first introduced in~\cite[\S3.3]{Nic06} as formulas axiomatizing subframe logics in \NNIL-form. They were inspired by the Jankov-de Jongh formulas, in fact they were introduced together in \cite{Nic06}. In the universal model of {\sf IPC} (see Definition~\ref{def:unipc}) the Jankov-de Jongh formulas characterize point generated upsets. Their validity can thus be translated into a tangible semantic condition which leads to a refutation criterion (known as the Jankov-de Jongh Theorem). In the case of the \NNIL-subframe formulas the refutation condition (via p-morphisms) comes immediately and was used in \cite{Nic06} to show that these formulas axiomatize all {\em subframe logics} (i.e., logics whose class of frames is closed under subframes). 

In \cite{Nic06} (see also \cite{BdJ15}) the \NNIL-subframe formulas  were introduced as certain \NNIL-formulas $\beta(\mathfrak{F})$ constructed inductively out of arbitrary finite rooted frames $\F$. Such constructions make  sense  for arbitrary finite models as well. We  now define  \NNIL-subframe formulas $\beta(\N)$ (in $n$ variables) with respect to arbitrary finite $n$-models $\N$ in the same manner. This slight difference in the definition will enable us to prove a simpler refutation criterion for  \NNIL-subframe formulas via monotonic maps, which will be important for the remaining sections of the paper. 

\begin{definition}\label{definition subframe formula}
Let $\N=(W,R,V)$ be a finite  $n$-model. For every $w\in W$, we define a \NNIL-formula $\beta(w)$ by induction on the depth of $w$ as follows:
\begin{itemize}
\item If $w$ is a maximal point of
$\f{N}$,  define
$$
\beta(w):=\bigwedge prop(w)\to \bigvee notprop(w),
$$
where
\[prop(w):=\{p_i\mid \N,w\models p_i,~1\leq i\leq n\}\]
\[\text{and }notprop(w):=\{p_i\mid \N,w\not\models p_i,~1\leq i\leq n\}.\] 
\item If $w$ is not maximal, and $w_1,\dots, w_k$ are all of its immediate successors with
$\beta(w_{i})$ already defined for every $w_i$, then define
$$
\beta(w):=\bigwedge prop(w)  \to
\bigvee notprop(w)\vee \bigvee_{i=1}^k \beta(w_i).
$$
\end{itemize}
If $\mathfrak N$ is rooted with root $r$, we define $\beta(\f{N})=\beta(r)$. 
\end{definition}

Note that the formula $\beta(w)$ is also parameterized with the natural number $n$, which is given by the $n$-model $\N$ that the node $w$ is taken from. \todof{added this sentence for the two referees.}



\begin{lemma} 
\label{lem:selfrefutetation}
For any finite $n$-model $\N$, we have $\N, w \not \models \beta(w)$.
\end{lemma}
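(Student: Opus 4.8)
The statement $\N,w\not\models\beta(w)$ is tailor-made for an induction on the depth of $w$, mirroring exactly the inductive structure of Definition~\ref{definition subframe formula}. The plan is to prove that at every point $w$, the formula $\beta(w)$ fails, by showing that $w$ satisfies the antecedent $\bigwedge prop(w)$ while refuting the whole disjunction in the consequent. The two clauses of the definition (maximal versus non-maximal $w$) become the base case and the inductive step.

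First I would record the trivial observation that $\N,w\models\bigwedge prop(w)$: by definition $prop(w)$ consists exactly of those $p_i$ true at $w$, so their conjunction holds at $w$ (and if $prop(w)=\emptyset$ the empty conjunction is $\top$, still true). To refute $\beta(w)=\bigwedge prop(w)\to(\dots)$ it therefore suffices, using the clause for $\to$ with the reflexive instance $wRw$, to show that $w$ fails the consequent. For the base case, $w$ maximal, the consequent is $\bigvee notprop(w)$; but every $p_i\in notprop(w)$ is by definition false at $w$, so the disjunction fails at $w$, giving $\N,w\not\models\beta(w)$. (Here if $notprop(w)=\emptyset$ the empty disjunction is $\bot$, which is also false at $w$, so the argument goes through uniformly.)

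For the inductive step, $w$ non-maximal with immediate successors $w_1,\dots,w_k$, the consequent is $\bigvee notprop(w)\vee\bigvee_{i=1}^k\beta(w_i)$. The first disjunct fails at $w$ exactly as in the base case. For the second disjunct, each $w_i$ has strictly smaller depth than $w$, so the induction hypothesis yields $\N,w_i\not\models\beta(w_i)$; since $w R w_i$ and truth is persistent in the sense that failure at a successor suffices to refute a disjunct evaluated at $w$ only if that disjunct is false \emph{throughout} $R(w)$, I must be slightly careful. What I actually need is $\N,w\not\models\beta(w_i)$, which does \emph{not} follow immediately from $\N,w_i\not\models\beta(w_i)$. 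The clean way around this is to refute the consequent not at $w$ itself but to argue directly about the implication: $\N,w\models\beta(w)$ would force, at the point $w$ (taking $u=w$ in the clause for $\to$, since $wRw$ and $w\models\bigwedge prop(w)$), that $w$ satisfies $\bigvee notprop(w)\vee\bigvee_i\beta(w_i)$; the first disjunct is excluded as above, so some $\beta(w_i)$ holds at $w$, whence $\N,w_i\models\beta(w_i)$ by persistence along $wRw_i$, contradicting the induction hypothesis.

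\textbf{Main obstacle.}
The only real subtlety, and the step I would watch most carefully, is this direction-of-persistence point just noted: one cannot conclude $\N,w\not\models\beta(w_i)$ from the hypothesis $\N,w_i\not\models\beta(w_i)$, because persistence propagates truth \emph{upward} along $R$, not failure downward. The correct organization is therefore the contrapositive sketched above—assume $\beta(w)$ holds at $w$, instantiate the implication at the reflexive point $w$, eliminate the $notprop$ disjuncts, and push the surviving $\beta(w_i)$ up to $w_i$ by persistence to contradict the induction hypothesis. Apart from this, the proof is a routine unwinding of the semantic clauses, and I would also note the harmless edge cases of empty $prop(w)$ and empty $notprop(w)$ so that the base case and step read uniformly.
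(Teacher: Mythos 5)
Your proof is correct and follows essentially the same induction on depth as the paper's. The ``main obstacle'' you flag is actually illusory: concluding $\N,w\not\models\beta(w_i)$ from $\N,w_i\not\models\beta(w_i)$ and $wRw_i$ is exactly the contrapositive of persistence (truth propagating upward is the same as failure propagating downward), which is precisely what the paper invokes; your contradiction-style rephrasing of the inductive step is logically the identical argument.
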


\begin{proof} 
We prove the lemma by induction on $d(w)$. If $d(w)=1$, clearly, 
\begin{equation}\label{lem:selfrefutetation_eq1}
\N, w\models\bigwedge prop(w)~\text{ and  }~\N, w\not\models\bigvee notprop(w), 
\end{equation}
which give $\N,w\not\models \beta(w)$. 

Suppose $d(w)>1$ and the lemma holds for all nodes with depth less than $d(w)$. Assume that $w_1,\dots,w_k$ are immediate successors of $w$. By induction hypothesis, we have $\N,w_i\not\models\beta(w_i)$ for all $1\leq i\leq k$. Thus, we obtain $\N,w\not\models\bigvee_{i=1}^k\beta(w_i)$ by persistency. Since (\ref{lem:selfrefutetation_eq1}) also holds for $w$, we conclude $\N,w\not\models\beta(w)$.
\end{proof}

We now  prove our new refutation criterion for the \NNIL-subframe formulas $\beta(\N)$ via monotonic maps. 
In this criterion and also in other discussions in the sequel, we will consider the unraveled tree-like models $\N_{\textsf{t}}$ instead of the finite rooted $n$-models $\N$ themselves. For reasons that will become apparent in the detailed proofs, it is in fact technically crucial to do so. This subtlety  was already apparent in the previous study of universal models for {\sf{NNIL}}-formulas in~\cite{Fan08}. 
Since any node $w$ in the finite $n$-model $\N$ with root $r$ and the corresponding node $\langle r,\dots,w\rangle$ in $\N_{\textsf{t}}$ have the same color and essentially the same set of immediate successors, one can show by induction that the two formulas $\beta(\N)$ and $\beta(\mathfrak N_{\textsf{t}})$ are actually identical. We will thus not distinguish between the two formulas $\beta(\N)$ and $\beta(\mathfrak N_{\textsf{t}})$.


\begin{theorem}\label{betamono1}\label{betamonothm}
Let $\M$ be an $n$-model and $\mathfrak N$ a finite rooted  $n$-model. Then, $\M \not \models \beta(\mathfrak N)$ iff $\M\leq \N_{\textsf{t}}$.

In particular, for any (Kripke or descriptive) frame $\F$, we have that $\F\not\models\beta(\mathfrak N)$ iff $\M\leq \f{N}_{\textsf{t}}$ for some model $\M$ on $\F$.

\end{theorem}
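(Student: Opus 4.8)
The plan is to prove the biconditional in two directions. For the backward direction, suppose there is a monotonic map $f : \mathfrak{T}_{\mathfrak N} \to \M$. By Lemma~\ref{lem:selfrefutetation} applied to $\mathfrak N$ (equivalently to $\mathfrak{T}_{\mathfrak N}$, using the identification $\beta(\mathfrak{T}_{\mathfrak N}) = \beta(\mathfrak N)$ noted just before the theorem), the root $r$ of $\mathfrak{T}_{\mathfrak N}$ satisfies $\mathfrak{T}_{\mathfrak N}, r \not\models \beta(\mathfrak N)$. Since $\beta(\mathfrak N)$ is a \NNIL-formula and monotonic maps reflect \NNIL-formulas by Lemma~\ref{nnil}, we get the contrapositive: from $\M, f(r) \not\models \beta(\mathfrak N)$ backwards we would conclude failure at $r$, but I need it the other way, so I argue by contraposition of \eqref{eq:1}. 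If $\M, f(r) \models \beta(\mathfrak N)$ then Lemma~\ref{nnil} gives $\mathfrak{T}_{\mathfrak N}, r \models \beta(\mathfrak N)$, contradicting self-refutation. Hence $\M, f(r) \not\models \beta(\mathfrak N)$, and therefore $\M \not\models \beta(\mathfrak N)$.

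For the forward direction, assume $\M \not\models \beta(\mathfrak N)$, so there is a world $v$ in $\M$ with $\M, v \not\models \beta(\mathfrak N) = \beta(r)$. I would build the monotonic map $f : \mathfrak{T}_{\mathfrak N} \to \M$ by recursion along the tree $\mathfrak{T}_{\mathfrak N}$, maintaining the invariant that each node $\sigma$ of $\mathfrak{T}_{\mathfrak N}$ is sent to a world $f(\sigma)$ with $\M, f(\sigma) \not\models \beta(\sigma)$ and $col(f(\sigma)) = col(\sigma)$. The base of the recursion sets $f(r) = v$. For the inductive step, suppose $f(\sigma)$ is defined with $\M, f(\sigma) \not\models \beta(\sigma)$. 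Unwinding the definition of $\beta(\sigma) = \bigwedge prop(\sigma) \to \bigl(\bigvee notprop(\sigma) \vee \bigvee_i \beta(\sigma_i)\bigr)$, the failure of this implication at $f(\sigma)$ produces a world $u$ with $f(\sigma) R u$ at which the antecedent holds and the consequent fails. Because $\bigvee notprop(\sigma)$ fails at $u$ while $\bigwedge prop(\sigma)$ holds, the color of $u$ matches $col(\sigma)$ exactly; and because each $\bigvee_i \beta(\sigma_i)$ fails at $u$, we have $\M, u \not\models \beta(\sigma_i)$ for every immediate successor $\sigma_i$ of $\sigma$ in the tree. I then set $f(\sigma_i) = u$ for each such $\sigma_i$, preserving the invariant. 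Order preservation holds since $f(\sigma) R u = f(\sigma_i)$, and color preservation is exactly the invariant; iterating down the (finite) tree yields the full monotonic map.

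The one subtlety I expect to be the main obstacle is precisely why unraveling to $\mathfrak{T}_{\mathfrak N}$ rather than working with $\mathfrak N$ directly is necessary: in the tree each node has a single predecessor, so the world $u$ witnessing the failure of $\beta(\sigma)$ can be assigned independently to the branch through $\sigma$ without any conflict with how other branches are mapped. In $\mathfrak N$ itself a node may be reached along several paths demanding incompatible images, and the construction need not be well-defined as a function. The tree structure guarantees consistency of the recursive assignment. I must also check the degenerate case where $\sigma$ is a maximal (leaf) node of $\mathfrak{T}_{\mathfrak N}$: there the formula is $\bigwedge prop(\sigma) \to \bigvee notprop(\sigma)$, and its failure at $f(\sigma)$ already forces $col(f(\sigma)) = col(\sigma)$, so no successors need be treated and the invariant holds trivially.

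Finally, the ``in particular'' clause follows immediately. A frame $\F$ refutes $\beta(\mathfrak N)$ iff some model $\M$ on $\F$ refutes it (by the definition of $\F \models \varphi$ as validity in all models on $\F$), and by the main biconditional this holds iff there is a monotonic map from $\mathfrak{T}_{\mathfrak N}$ into some model $\M$ on $\F$.
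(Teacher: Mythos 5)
Your backward direction and the ``in particular'' clause are correct and match the paper, as does your (important) observation that the unraveling $\mathfrak{T}_{\mathfrak N}$ is needed so that each node has a unique predecessor and the branchwise choices cannot conflict. The forward direction, however, contains a genuine off-by-one error that breaks color preservation. Unwinding the failure of $\beta(\sigma)$ at $f(\sigma)$ gives a witness $u$ with $f(\sigma)Ru$, $col(u)=col(\sigma)$, and $\M,u\not\models\beta(\sigma_i)$ for every child $\sigma_i$; you then set $f(\sigma_i)=u$ for all $i$. But your own invariant for $\sigma_i$ demands $col(f(\sigma_i))=col(\sigma_i)$, whereas $col(u)=col(\sigma)$, and in general $col(\sigma_i)\neq col(\sigma)$ (persistency only gives $col(\sigma)\leq col(\sigma_i)$). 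So the map you build sends every child of $\sigma$ to a point of the \emph{parent's} color and is not a monotonic map of models. The same slip already appears at the root: a point $v$ at which the implication $\beta(r)$ fails need not itself satisfy the antecedent $\bigwedge prop(r)$, so $col(v)=col(r)$ is not justified; only the witness of the failure has that color.

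The repair is one more unwinding step, and it is exactly what the paper does: carry the invariant that $col(f(w))=col(w)$ and $\M,f(w)\not\models\beta(w_i)$ for each \emph{child} $w_i$ (rather than $\M,f(w)\not\models\beta(w)$). For the root, take $f(r)$ to be a point $x$ at which the antecedent of $\beta(r)$ holds and the consequent fails, so that $col(x)=col(r)$ and $x\not\models\beta(r_i)$ for all $i$. At the inductive step, for each child $\sigma_i$ \emph{separately}, use $\M,f(\sigma)\not\models\beta(\sigma_i)$ to obtain a successor $u_i$ of $f(\sigma)$ at which the antecedent of $\beta(\sigma_i)$ holds and its consequent fails; then $col(u_i)=col(\sigma_i)$, $u_i\not\models\beta(\sigma_{ij})$ for all grandchildren $\sigma_{ij}$, and one sets $f(\sigma_i)=u_i$. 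With this shift the invariant propagates, order and color are both preserved, and the rest of your argument goes through unchanged.
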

\begin{proof}
Suppose first that $\M\leq \N_{\textsf{t}}$ and $r$ is the root of $\N_{\textsf{t}}$. By Lemma \ref{lem:selfrefutetation}, we have $\N_{\textsf{t}}, r \not\models\beta(r)$. Since $\beta(r)\in\NNIL$, we obtain by \Cref{nnil} that $\M\not\models\beta(r)$, i.e., $\M\not\models\beta(\N)$. \todof{Ok, as you suggested I simplified this paragraph a little bit, but only a little bit gets simplified actually...}


Conversely, assuming $\M \not \models \beta(\mathfrak N)$ we define a color-preserving monotonic map $f:\N_{\textsf{t}}\to\M$ by defining the value $f(u)$
by induction on the depth of $u$ in $\N_{\textsf{t}}$. We will guarantee that $f$ is monotonic and color-preserving by guaranteeing the following: (i) $f(u)$ has the color of $u$. (ii) For the unique immediate predecessor $w$ of $u$, $f(u)$ is a successor of $f(w)$. (iii) For every immediate successor $u_i$ of $u$,  $\M,f(u)\not\models\beta(u_i)$.


We first define $f(r)$.
Since ${\mathfrak M}\not\models \beta(r)$, there exists a node $x$ in $\M$ such that  \todof{rewrote the whole proof}
\begin{equation}\label{betamono1_eq1}
{\mathfrak M},x\models \bigwedge prop(r),~ {\mathfrak M},x\not\models \bigvee notprop(r)\text{ and }{\mathfrak M},x\not\models \beta(r_i)
\end{equation}
for each immediate successor $r_i$ of $r$ (if any). We define $f(r)=x$. Clearly (\ref{betamono1_eq1}) implies that $col(f(r))=col(x)=col(r)$ and $\M,f(r)\not\models\beta(r_i)$.

Next, we define $f(u)$ for $u$ properly above $r$. Let $w$ the unique immediate predecessor of $u$ in the unraveled tree-like model $\N_{\textsf{t}}$, and suppose that $f(w)$ has already been defined. Similar to the above, since we have guaranteed that $\M, f(w) \not \models \beta(u)$, there is a successor $x$ of $f(w)$ for which the corresponding clause (\ref{betamono1_eq1}) holds for $x$ and $u$. Define $f(u) =x$. By definition $f(u)$ is a successor of $f(w)$. Again, (\ref{betamono1_eq1}) implies that $col(f(u))=col(x)=col(u)$ and $\M,f(u)\not\models\beta(u_i)$ for each immediate successor $u_i$ of $u$.
%
\end{proof}


Let $\mathsf{B}$ denote the collection of all \NNIL-subframe formulas of finite models as defined in \Cref{definition subframe formula}, i.e. 
\begin{equation*}
\mathsf{B}= \{ \beta(w) \mid w \text{ is a node in some finite $n$-model for some $n$}\}.
\end{equation*}
Obviously, $\mathsf{B}$  is included in the class of \NNIL-formulas. As mentioned already, N. Bezhanishvili \cite{Nic06} gave the refutation criterion for formulas in $\mathsf{B}$ via p-morphisms, which are color-preserving monotonic maps with extra conditions. In this sense our \Cref{betamonothm} is simpler than the corresponding one in \cite{Nic06}. On the basis of the refutation criterion N. Bezhanishvili was able to prove that the formulas in $\mathsf{B}$ are sufficient to axiomatize the intermediate subframe logics and hence

\begin{theorem}[\cite{Nic06}, Cor. 3.4.16]\label{subframe_log_thm}
All intermediate subframe logics are axiomatized by \NNIL-formulas.
\end{theorem}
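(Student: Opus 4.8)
The plan is to deduce \Cref{subframe_log_thm} directly from the refutation criterion \Cref{betamonothm} together with the standard machinery of subframe logics. Recall that an intermediate logic $L$ is a \emph{subframe logic} if its class of (descriptive) frames is closed under taking subframes. The key point is that refutation of a formula $\beta(\N)$ corresponds, via \Cref{betamonothm}, to the existence of a monotonic map from the unraveled tree $\f{T}_\N$ into a model on the frame $\F$, and that such a monotonic map is exactly what witnesses $\F$ failing to lie in the subframe-closed class determined by $\N$. So the task is to show that these refutation patterns generate \emph{all} the failures that a subframe logic can detect.

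First I would make precise the correspondence between monotonic maps and subframes. The crucial observation is that a monotonic, color-preserving map from a finite rooted model $\f{T}_\N$ into $\M$ can be factored: composing with the natural p-morphism $\alpha$ from an unraveling, and using that a monotonic map into $\M$ amounts to finding inside $\F$ a substructure onto which $\N$ (or its frame) p-morphically projects. Thus $\F \not\models \beta(\N)$ should be shown equivalent to the condition that the frame of $\N$ is a p-morphic image of a subframe of $\F$ — this is the refutation criterion in the form already present in~\cite{Nic06}, now re-derived from \Cref{betamonothm}. Here I would lean on the remark in the excerpt that p-morphisms are monotonic maps satisfying the extra forth condition, and on the invariance of truth under p-morphisms stated just after the definition of the natural map $\alpha$.

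Next I would assemble the axiomatization. Given an arbitrary intermediate subframe logic $L$, consider the set $\mathsf{B}_L$ of all formulas $\beta(\F)$ where $\F$ ranges over the finite rooted frames that are \emph{not} frames for $L$. I would argue that $L$ is axiomatized (over {\sf IPC}) by $\mathsf{B}_L$: by \Cref{lem:selfrefutetation} each such $\beta(\F)$ is refuted on $\F$, so all these formulas are sound for $L$; conversely, if a descriptive frame $\f{G}$ validates every formula in $\mathsf{B}_L$, then by the refutation criterion no excluded finite $\F$ is a p-morphic image of a subframe of $\f{G}$, whence — using that $L$ is a subframe logic and has enough finite refutation patterns — $\f{G}$ is a frame for $L$. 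Completeness of {\sf IPC} with respect to descriptive frames then yields that $\mathsf{B}_L$ axiomatizes $L$. Since every $\beta(\F)$ is a \NNIL-formula, this gives a \NNIL-axiomatization.

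The main obstacle I anticipate is the converse direction of the axiomatization, specifically transferring the refutation criterion from Kripke frames to \emph{descriptive} frames and ensuring that the chosen generating set $\mathsf{B}_L$ is complete rather than merely sound. Soundness is immediate from self-refutation, but completeness requires that every frame refuting $L$ is caught by some $\beta(\F)$, which in turn needs the fact that subframe logics are determined by their finite rooted frames and that the subframe/p-morphism refutation condition is preserved under the passage to descriptive frames (the topo-subframe subtleties). I would handle this by invoking the known finite model property and canonicity of intermediate subframe logics, or, to stay self-contained, by appealing to the refutation criterion of \Cref{betamonothm} in its descriptive-frame form (the ``in particular'' clause) to cover descriptive $\F$ directly. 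Beyond this point the argument is the standard Zakharyaschev-style axiomatization-by-refutation-patterns, so I would not grind through those routine verifications.
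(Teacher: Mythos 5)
The paper does not actually prove this statement: it is imported from Bezhanishvili's thesis (\cite{Nic06}, Cor.~3.4.16), and the surrounding text only records that the proof there rests on a refutation criterion for the formulas $\beta(\F)$ phrased via p-morphisms. So there is no in-paper proof to compare yours against; what can be judged is whether your sketch stands on its own, and it has a genuine gap at its pivot.

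The gap is the claimed equivalence ``$\F \not\models \beta(\mathfrak N)$ iff the frame of $\mathfrak N$ is a p-morphic image of a subframe of $\F$,'' which you propose to re-derive from \Cref{betamonothm}/\Cref{betamonothm-frame}. For an arbitrary finite model $\mathfrak N$ this is simply false: take $\mathfrak N$ to be a two-element chain on which no variable is true; then $\beta(\mathfrak N)$ is equivalent to $p_1\vee\dots\vee p_n$, which is refuted on a one-point frame, while a two-element chain is not a p-morphic image of any subframe of a single point. A monotonic map $f\colon \mathfrak{T}_{\mathfrak N}\to\M$ yields a substructure $f[\mathfrak{T}_{\mathfrak N}]$ of the target, but nothing forces the forth condition in the reverse direction, so no p-morphism from that substructure back onto $\mathfrak N$ is produced. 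The p-morphism criterion of \cite{Nic06} holds only for the special frame-encoding valuation in which distinct points of $\F$ get distinct colours faithfully reflecting the order, and deriving it even from the colour-consistent criterion is a separate argument your sketch does not supply. This matters because your soundness direction genuinely needs the p-morphism form: from a mere monotonic map into an $L$-frame $\f{G}$ one cannot conclude that the resulting subframe of $\f{G}$ refutes the (non-\NNIL) formula $\psi\in L$ witnessing $\F\not\models L$, since monotonic maps reflect only \NNIL-formulas; note also that \Cref{lem:selfrefutetation} gives refutability of $\beta(\F)$ on $\F$, which is not what soundness ($L\vdash\beta(\F)$) requires. Finally, the completeness half rests on the unproven clause ``has enough finite refutation patterns,'' which is exactly the selective-filtration step at the heart of Fine--Zakharyaschev subframe theory; appealing to the known fmp of subframe logics to fill it is legitimate but presupposes results of comparable depth to the theorem being proved.
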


Recall that by \Cref{nnil} all \NNIL-formulas are in $\mon$. As another consequence of \Cref{betamono1}, the three formula classes $\textsf{B}$, $\NNIL$ and $\mon$ distinguish the same finite pointed models in the sense of the following definition. This result for $\NNIL$- and $\mon$-formulas follows also already from \cite{VNNIL}, by a different argument. 


\begin{definition}\label{phi_indist_df}
Let $\Phi$ be a class of formulas. Two pairs $(\M, w)$ and $(\f{N}, u)$ of models with nodes in the corresponding domains are said to be \emph{$\Phi$-equivalent}, written $(\M, w)\simeq_{\Phi}(\f{N}, u)$, if for each $\varphi \in \Phi$,
\begin{equation*}
\M, w \models \varphi  \iff  \mathfrak N, u \models \varphi.
\end{equation*}
Similarly, we write $\M\simeq_{\Phi}\f{N}$ if for each $\varphi \in \Phi$,
\begin{equation*}
\M \models \varphi  \iff  \mathfrak N \models \varphi.
\end{equation*}
 
\end{definition}
\begin{proposition}
\label{prop:equivalent}
Let $\M$ and $\mathfrak N$ be finite models with nodes $w$ and $u$ in the corresponding domains, respectively. The following are equivalent:
\begin{enumerate}[label=(\roman*)]
\item $(\M, w)\simeq_{\mathsf{B}}(\mathfrak N, u)$.
\item $(\M, w)\simeq_{\NNIL}(\mathfrak N, u)$.
\item $(\M, w)\simeq_{\mon}(\mathfrak N, u)$.
\end{enumerate}
\end{proposition}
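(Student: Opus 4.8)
The plan is to exploit the inclusions $\mathsf{B}\subseteq\NNIL\subseteq\mon$ together with the refutation criterion of \Cref{betamono1}. Since $\Phi$-equivalence with respect to a larger class of formulas is the stronger condition, the inclusions immediately give the implications (iii) $\Rightarrow$ (ii) $\Rightarrow$ (i): if $\M,w$ and $\mathfrak N,u$ agree on all $\mon$-formulas, they a fortiori agree on the subclasses $\NNIL$ and $\mathsf{B}$. All the content therefore lies in closing the cycle with (i) $\Rightarrow$ (iii), that is, in showing that $\mathsf{B}$-equivalence of the two pointed models already forces agreement on every $\mon$-formula.

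For (i) $\Rightarrow$ (iii) I would argue by symmetry and prove just one direction, say $\mathfrak N,u\models\varphi\Rightarrow\M,w\models\varphi$ for an arbitrary $\varphi\in\mon$. The engine is a single formula of $\mathsf{B}$, namely $\beta(\M_w)=\beta(w)$ built from the generated submodel $\M_w$. By the self-refutation \Cref{lem:selfrefutetation} we have $\M,w\not\models\beta(\M_w)$; since $\beta(\M_w)\in\mathsf{B}$ and $(\M,w)\simeq_{\mathsf{B}}(\mathfrak N,u)$, this transfers to $\mathfrak N,u\not\models\beta(\M_w)$, hence $\mathfrak N_u\not\models\beta(\M_w)$. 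Applying the refutation criterion \Cref{betamono1}, with $\mathfrak N_u$ in the role of the ambient model and the finite rooted $\M_w$ in the role of the model whose subframe formula is taken, produces a monotonic map $g\colon\mathfrak{T}_{\M_w}\to\mathfrak N_u$.

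To finish I would chain three invariances. Let $r$ be the root of $\mathfrak{T}_{\M_w}$. Because every point of the generated submodel $\mathfrak N_u$ lies $R$-above $u$, we have $u\,R\,g(r)$; so if $\mathfrak N,u\models\varphi$ then, by persistence of truth in Kripke models, also $\mathfrak N,g(r)\models\varphi$. As $\varphi\in\mon$ and $g$ is monotonic, the defining reflection property of $\mon$-formulas yields $\mathfrak{T}_{\M_w},r\models\varphi$. Finally the natural map from $\mathfrak{T}_{\M_w}$ onto $\M_w$ is a surjective p-morphism sending $r$ to $w$, and truth is invariant under p-morphisms, so $\M_w,w\models\varphi$, i.e.\ $\M,w\models\varphi$. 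The reverse direction is identical with the roles of $(\M,w)$ and $(\mathfrak N,u)$ interchanged, using $\beta(\mathfrak N_u)$ and a monotonic map $\mathfrak{T}_{\mathfrak N_u}\to\M_w$.

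The step I expect to require the most care is exactly the matching of base points. One might worry that neither the witness to $\mathfrak N_u\not\models\beta(\M_w)$ nor the image $g(r)$ of the root need coincide with the distinguished node $u$, which would appear to break the transfer. The observation that dissolves this obstacle is that $g(r)$ is automatically an $R$-successor of $u$ (the whole of $\mathfrak N_u$ is), so persistence of $\mon$-formulas lets truth at $u$ propagate up to $g(r)$, and no root-to-root map is needed. I would also state explicitly the small facts on which the argument rests, namely that $\mon$-formulas are ordinary {\sf IPC}-formulas and hence persistent, and that the natural unraveling map is still a surjective p-morphism for the finite unraveling $\mathfrak{T}$.
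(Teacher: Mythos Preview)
Your proposal is correct and follows essentially the same route as the paper: use the inclusions $\mathsf{B}\subseteq\NNIL\subseteq\mon$ for the easy implications, and for (i)$\Rightarrow$(iii) apply self-refutation to one of the two pointed models, transfer the failure of the $\beta$-formula via $\mathsf{B}$-equivalence, invoke \Cref{betamono1} to obtain a monotonic map from the unraveled tree into the other generated submodel, and then combine persistence with the $\mon$-reflection property (the paper proves the symmetric direction and leaves the p-morphism step implicit by identifying the root of $\mathfrak{T}_{\mathfrak N_u}$ with $u$). One small point worth adding: before forming $\beta(\M_w)$ you should fix $n$ so that the variables of $\varphi$ are among $p_1,\dots,p_n$ and view $\M,\mathfrak N$ as $n$-models, since $\beta(w)$ and the color-preservation of the monotonic map both depend on this choice.
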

\begin{proof}
The implications (iii) $\Rightarrow$ (ii) $\Rightarrow$ (i) are obvious since $\mathsf{B} \subseteq \mathsf{NNIL} \subseteq \mon$. 
We show that (i) implies (iii). Assume that $(\M, w)\simeq_{\mathsf{B}}(\mathfrak N, u)$. 
For any $\varphi \in \mon$, we only show the direction that  $\M, w \models \varphi$ implies $\mathfrak N, u\models \varphi$; the other direction is symmetric. Assume that $\varphi$ is an $n$-formula and view $\N$ and $\M$ as $n$-models by restricting the valuations to the variables of $\varphi$. 
By Lemma \ref{lem:selfrefutetation}, $\mathfrak N, u \not \models \beta(u)$, which by $\mathsf{B}$-equivalence implies $\M, w \not \models \beta(u)$, or $\M_w, w \not \models \beta(u)$. Now, by  \Cref{betamono1}, there is a color-preserving monotonic map $f$ from the unraveling $(\mathfrak N_{u})_{\textsf{t}}$ into $\M_w$. Clearly $f(u)$ is a successor of $w$. 
Thus, by assumption and persistency, we have $\M_w, f(u) \models \varphi$, which implies  $(\mathfrak N_{u})_{\textsf{t}}, u \models \varphi$ as $\varphi \in \mon$. Hence $\mathfrak N, u \models \varphi$, as required. 
\end{proof}

Next, we generalize  \Cref{betamonothm} by relaxing it from a statement about a color-preserving monotonic map into a model on a frame $\F$ to a statement about a color-consistent monotonic map into the frame $\F$.  We call a monotonic map $f$ from an $n$-model $\N$ into a frame $\F=(W,R)$  \textit{color-consistent} if  for all points $w,u$ in $\N$, 
\[f(w)Rf(u)\Longrightarrow col(w)\leq col(u).\]






\begin{theorem}\label{betamonothm-frame}
Let $\F$ be a (Kripke or descriptive) frame, and $\mathfrak N$ a finite rooted $n$-model. 
Then, $\F\not\models\beta(\mathfrak N)$ iff there is a color-consistent monotonic  map from $\N_{\textsf{t}}$ into $\F$. 
\end{theorem}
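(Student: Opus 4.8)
The plan is to reduce \Cref{betamonothm-frame} to the frame version of \Cref{betamono1}, whose only ``gap'' is that the latter speaks of a (color-preserving) monotonic map into an admissible \emph{model} $\M=(\F,V)$ on $\F$, whereas we now want a color-consistent monotonic map into the bare frame $\F$. So the whole theorem amounts to the equivalence: there is a color-preserving monotonic map from $\mathfrak{T}_{\mathfrak N}$ into \emph{some} model $\M=(\F,V)$ on $\F$ if and only if there is a color-consistent monotonic map from $\mathfrak{T}_{\mathfrak N}$ into $\F$. I would prove the two implications separately, the first being immediate and the second carrying the real content.

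For the forward direction, assume $\F\not\models\beta(\mathfrak N)$. By the ``in particular'' clause of \Cref{betamono1} there is a monotonic (hence color-preserving) map $f$ from $\mathfrak{T}_{\mathfrak N}$ into some admissible model $\M=(\F,V)$, so $col(f(w))=col(w)$ for every $w$. I claim $f$, regarded as a map into the frame, is color-consistent. Indeed, suppose $f(w)Rf(u)$. Since $V$ is persistent, every $p_i$ true at $f(w)$ is true at $f(u)$, i.e.\ $col(f(w))\le col(f(u))$; combining with color preservation gives $col(w)=col(f(w))\le col(f(u))=col(u)$, which is exactly color-consistency.

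For the converse, given a color-consistent monotonic map $f\colon\mathfrak{T}_{\mathfrak N}\to\F$ I must manufacture an admissible valuation $V$ on $\F$ turning $f$ into a color-preserving model map; \Cref{betamono1} then yields $\F\not\models\beta(\mathfrak N)$. For each variable $p_i$ set $T_i=\{f(w)\mid \mathfrak{T}_{\mathfrak N},w\models p_i\}$ and $F_i=\{f(w)\mid \mathfrak{T}_{\mathfrak N},w\not\models p_i\}$, both finite since $\mathfrak{T}_{\mathfrak N}$ is finite. The key point, used repeatedly, is that color-consistency forces the upward closure of $T_i$ to be disjoint from $F_i$: if $f(v)Rf(u)$ with $v\models p_i$ and $u\not\models p_i$, then $col(v)\le col(u)$ would give $u\models p_i$, a contradiction. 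In the Kripke case I simply take $V(p_i)$ to be the upward closure $\{y\mid xRy\text{ for some }x\in T_i\}$, which is an upset and hence admissible; by this disjointness $f(w)\in V(p_i)$ iff $w\models p_i$, so $col_V(f(w))=col(w)$ as required.

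The descriptive case is where the main obstacle lies: the upward closure of $T_i$ need not be an admissible set, so I cannot use it verbatim as $V(p_i)$. Here I would invoke the fact that descriptive frames are \emph{tight} (refined): whenever $\neg(xRy)$ there is an admissible upset containing $x$ but omitting $y$. For each $w\in T_i$ and each $x\in F_i$ we have $\neg(wRx)$ by the disjointness above, so pick an admissible upset $A_{w,x}\ni w$ with $x\notin A_{w,x}$; then $A_w=\bigcap_{x\in F_i}A_{w,x}$ is admissible (finite intersection), contains $w$, and misses all of $F_i$, and $V(p_i)=\bigcup_{w\in T_i}A_w$ is admissible (finite union), contains $T_i$, and is disjoint from $F_i$. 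This $V(p_i)$ is an admissible upset separating $T_i$ from $F_i$, so again $f(w)\in V(p_i)$ iff $w\models p_i$, and $f$ becomes a color-preserving map into $\M=(\F,V)$. The finiteness of $\mathfrak{T}_{\mathfrak N}$ (hence of $T_i$ and $F_i$) is essential for these intersections and unions to remain admissible, and checking that the resulting $V$ is persistent and admissible is the only genuinely frame-theoretic step; everything else is bookkeeping of colors.
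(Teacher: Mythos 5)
Your proof is correct and follows essentially the same route as the paper: the forward direction is the same observation that persistence of the valuation makes a color-preserving model map color-consistent, and the converse uses the refinement property of descriptive frames together with finite intersections and unions of admissible upsets to build a valuation under which $f$ preserves colors, after which \Cref{betamono1} applies. The only cosmetic difference is that you separate, per variable, the images of the $p_i$-points from the images of the non-$p_i$-points, whereas the paper builds one admissible neighbourhood per image point and assembles $V(p)$ from those; both hinge on the same use of color-consistency to get the required disjointness.
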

\begin{proof} The left to right direction follows from \Cref{betamonothm}, as a color-preserving\todod{corrected this from consistent} monotonic map into a model is clearly color-consistent. \todof{$\leftarrow$Now I see calling it a monotonic map (between models) is indeed confusing, especially at this place}\todof{It is fine now.} 
For the other direction assume that $f$ is a monotonic color-consistent map from $\N_{\textsf{t}}$ into $\F=(W,R,\mathcal{P})$. 

\begin{claim} 
For every $w$ in $\N_{\textsf{t}}$, there exists $U_w\in \Pcal$ such that $f(w)\in U_w$, and for every other node $u$ in $\N_{\textsf{t}}$, $f(u)\in U_w$ iff $f(w)Rf(u)$.
\end{claim}
\begin{proofclaim}
Define $U_w=\bigcap\{V_{u}\mid \neg f(w)Rf(u)\}$, where $V_{u}$ is an upset in $\Pcal$ containing $f(w)$ but not $f(u)$, whose existence is guaranteed by $\Pcal$ being refined (which is the case also when $\F$ is a Kripke frame). \todof{new sentence, see also p4 the definition of descriptive frame}
\end{proofclaim}

Now, define $V(p) =\bigcup\{U_w\mid \N_{\textsf{t}},w \models p\}$, where each $U_w$ is as in the claim. 
We verify that $f$ preserves colors between $\N_{\textsf{t}}$ and the model $(\F,V)$, 
and
$\F \not \models \beta(\mathfrak N)$ will then follow by \Cref{betamonothm}. Now, by the definition of $V$ and the claim, $\N_{\textsf{t}},w\models p$ implies $f(w)\in U_w\subseteq V(p)$, and thus $f(w)\in V(p)$. Conversely, if $f(u)\in V(p)$, then there exists $w$  such that $f(u)\in U_w$ and $\N_{\textsf{t}},w\models p$. The former implies by the claim that $f(w)Rf(u)$. Since $f$ is color-consistent, we have $col(w)\leq col(u)$, which implies that $\N_{\textsf{t}},u\models p$, as required. \todof{changed the notations in the proof}
\end{proof}

We end this section by deriving as an immediate consequence of the above theorem the surprising and important consequence that $\beta(\mathfrak N)$-formulas are preserved by arbitrary substructures. We say that a (Kripke or descriptive) frame $\mathfrak G=(W', R',\Pcal')$ is a \emph{substructure} of another frame $\mathfrak F= (W, R,\Pcal)$ iff $W' \subseteq W$ and $R'=R\,{\upharpoonright}\, W'$. 
\begin{corollary} 
\label{cor:substructures}
Let $\mathfrak F$ and $\mathfrak G$ be (Kripke or descriptive) frames with $\mathfrak G$ a substructure of $\mathfrak F$. If $\mathfrak F \models \beta(\mathfrak N)$ for some finite $n$-model $\mathfrak N$, then $\mathfrak G \models \beta(\mathfrak N)$. In other words, validity of formulas in $\mathsf{B}$ is preserved by substructures. 
\end{corollary}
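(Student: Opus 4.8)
The plan is to derive the corollary as a contrapositive consequence of \Cref{betamonothm-frame}. Suppose for contradiction that $\mathfrak G \not\models \beta(\mathfrak N)$ while $\mathfrak F \models \beta(\mathfrak N)$. By the right-to-left direction of \Cref{betamonothm-frame} applied to the substructure $\mathfrak G$, there exists a monotonic color-consistent map $f : \mathfrak{T}_{\mathfrak N} \to \mathfrak G$. The goal is to transport this map unchanged into $\mathfrak F$, obtaining a monotonic color-consistent map $\mathfrak{T}_{\mathfrak N} \to \mathfrak F$, which by the left-to-right direction of \Cref{betamonothm-frame} would force $\mathfrak F \not\models \beta(\mathfrak N)$, the desired contradiction.

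**First I would** observe that since $W' \subseteq W$, the very same function $f$ (now viewed as a map into the larger point set $W$) is a candidate map into $\mathfrak F$. I then need to check that $f$ retains the two properties required by \Cref{betamonothm-frame}. For order preservation: if $\sigma R_{\mathsf t} \tau$ in $\mathfrak{T}_{\mathfrak N}$, then $f(\sigma) R' f(\tau)$ in $\mathfrak G$; because $R' = R \restr W'$, this immediately gives $f(\sigma) R f(\tau)$ in $\mathfrak F$, so monotonicity is inherited. For color-consistency the key point is that the condition $f(\sigma) R f(\tau) \Rightarrow col(\sigma) \le col(\tau)$ need only be verified for pairs of points both lying in the image of $f$, hence both in $W'$; and for such pairs $f(\sigma) R f(\tau)$ is equivalent to $f(\sigma) R' f(\tau)$ by the definition of substructure, so color-consistency transfers verbatim. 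Thus $f : \mathfrak{T}_{\mathfrak N} \to \mathfrak F$ is monotonic and color-consistent.

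**The main subtlety** — and the only place the substructure notion (rather than the stronger subframe notion) actually matters — is that \Cref{betamonothm-frame} is stated for descriptive as well as Kripke frames, and for a descriptive substructure $\mathfrak G = (W', R', \Pcal')$ nothing is assumed relating $\Pcal'$ to $\Pcal$ beyond $W' \subseteq W$ and $R' = R \restr W'$; in particular the topo-subframe condition is \emph{not} required. I expect the potential obstacle to be whether the proof of \Cref{betamonothm-frame} genuinely goes through in this generality, since its right-to-left direction invokes refinement to produce the upsets $U_i \in \Pcal'$. But that direction is only used on $\mathfrak G$, where the admissible sets $\Pcal'$ are whatever the substructure carries, and the left-to-right direction used on $\mathfrak F$ requires only a color-consistent monotonic map, not any compatibility of admissible sets. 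Hence no relationship between $\Pcal'$ and $\Pcal$ is needed, which is exactly the point the corollary (and the surrounding discussion about \Cref{cor:substructures}) is making: the absence of the topo-subframe condition is harmless here, in contrast to the modal case. The argument is therefore short, with the real content residing in the preceding theorem.
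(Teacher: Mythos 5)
Your proposal is correct and follows essentially the same route as the paper: contraposition, extract a color-consistent monotonic map into $\mathfrak G$ via \Cref{betamonothm-frame}, compose with the inclusion into $\mathfrak F$ (noting that $R'=R\restriction W'$ makes both monotonicity and color-consistency transfer), and apply \Cref{betamonothm-frame} again. Your added observations about why no relation between $\Pcal'$ and $\Pcal$ is needed match the paper's own remark following the corollary.
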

\begin{proof}
Suppose for contraposition that $\mathfrak G \not \models \beta(\mathfrak N)$. By Theorem \ref{betamonothm-frame}, there is a color-consistent monotonic map $f: \N_{\textsf{t}} \rightarrow \mathfrak G$. The map $f$ composed with the embedding   from $\mathfrak G$ into $\mathfrak F$ 
 is easily seen to be color-consistent. Thus, we conclude $\mathfrak F \not \models \beta(\mathfrak N)$ by applying \Cref{betamonothm-frame} again.   
\end{proof}

Not even in the descriptive case substructures impose any relation between the admissible sets of $\mathfrak G$ and $\mathfrak F$. This contrasts with the definition of the topo-subframes that are needed in modal logic. In modal logic the corresponding result does not apply, as a subframe logic like {\sf GL} is an obvious counterexample.  
This property, together with \Cref{subframe_log_thm}, immediately implies that intermediate subframe logics are canonical. \todof{added  \Cref{subframe_log_thm} as pointed out by referee 2} We will formally state this result and also generalize it to logics axiomatized by arbitrary \NNIL- or $\mon$-formulas  in the next section in \Cref{BmonNNIL-logics,cor:canonical}. 

\section{Universal models}\label{sec:uni}

In this section we construct $n$-universal models for \NNIL-formulas. This is a continuation of the project started in~\cite{Fan08} in which among other things the 2-universal model for \NNIL was constructed using the subsimulations of~\cite{VNNIL}. In this present paper 
we will construct the universal models using color-preserving monotonic maps instead. By the results of the previous section these models will actually also be universal models for $\mon$-formulas. We will then be able to derive that \NNIL-formulas are exactly the ones that are reflected 
by color-preserving monotonic maps, an important result occurring indirectly 
already in the earlier study of \NNIL-formulas \cite{VNNIL}. We will also formally conclude, as a consequence of \Cref{cor:substructures} in the previous section, that logics axiomatized by \NNIL- or $\mon$-formulas are canonical. 

\subsection{Definition and properties of universal models}

 Universal models for modal logics and {\sf IPC} were thoroughly investigated by a number of authors~\cite{B86,deJYang,Grigolia87,Rybakov97,Shehtman78} (see \cite[\S8]{CZ97} and \cite[\S3]{Nic06} for an overview), and results for fragments of {\sf IPC} can be found in~\cite{gool,tzimoulis,Lex96,renardel}. \todof{I moved this sentence to here, because I think it does not belong to where it were (after Def. 4.1). This was basically mentioned by reviewer 2.} We now give a formal and general definition for the notion of $n$-universal model  that applies to
rather different fragments of {\sf IPC} in a uniform manner. Such a general definition will also allow us  to point out the place and usefulness of universal models. \todod{changed and added}\todof{Now made a somewhat radical change here to the text}


\begin{definition} \label{def:universalmodel} 
We say that a model $\M=(W,R,V)$ is an \emph{$n$-universal model} of a class $\Phi$ of $n$-formulas iff every upset generated by a single point in $W$ is finite, 
and\todod{ added this}\todof{rephrased a little} the following conditions are satisfied:
\begin{enumerate}[label=(\roman*)]
\item 
For any $\varphi,\psi\in \Phi$, if $\varphi\nvdash\psi$, then there exists $w\in W$ such that $w\models\varphi$ and $w\not\models\psi$.

\item For every 
upset $U$ of $\M$ generated by a single point,  
there is $\varphi \in  \Phi$ such that $V(\varphi)= U$. \todof{Why do we require that $U$ is finite?} \todod{Because usually point-generated upsets in universal models are finite. I suggest we leave it out as I did. Takes care of a remark by Referee 2}. \todod{Ultimately I have been more radical. Read below.}
\end{enumerate}
\end{definition}

Condition (i) in the $n$-universal model makes sure that ``there are enough counterexamples", while condition (ii) ensures that ``there are no superfluous" points. By taking the contrapositive of (i) we  know  that for any two formulas $\varphi,\psi\in \Phi$, if $V(\varphi)\subseteq V(\psi)$ in the universal model, then $\vdash\varphi\to\psi$; in particular,  $V(\varphi)=V(\psi)$ implies $\vdash\varphi\leftrightarrow\psi$. Condition (ii) implies that no two distinct worlds in the $n$-universal model of $\Phi$ are $\Phi$-equivalent.  For formula classes $\Phi$ that have the finite model property (e.g., for {\sf IPC}), a model satisfying conditions (i) and (ii) automatically satisfies the requirement that point-generated upsets are finite. In general only for a locally finite formula class $\Phi$ do universal models seem to be useful.  
 \todod{Added this.}\todof{rephrased a little}\todod{And I changed it once more: to locally finite}


 The definable upsets in $n$-universal models will
reflect the algebraic structure of the logic or fragment.\todod{ changed this sentence} In the case of {\sf IPC} itself it will be the Lindenbaum-Tarski algebra, a Heyting Algebra, but this is also the case for the fragment of the formulas with only $\to$ and $\wedge$, because even without $\vee$ it forms a distributive lattice, which in the finite case always is a Heyting algebra. For the fragment of {\sf IPC} with only $\to$ this is no longer the case but it still gives suitable $n$-universal models~\cite{renardel}. For the set of all \NNIL-formulas in an arbitrary number of variables the Lindenbaum-Tarski algebra is not a Heyting algebra since, although they do form a distributive lattice, they are not closed under implication. But the \NNIL-formulas in $n$ variables, by their local finiteness, do automatically form a Heyting algebra.


Since these algebras are free algebras there is a close relationship with the $n$-canonical model (also known as $n$-Henkin model). Usually the $n$-universal model is the ``upper part" of the $n$-Henkin model (see~\cite{Nic06,deJYang}), and, in the case of locally finite fragments, isomorphic to it (see~\cite{gool,tzimoulis}). Unsurprisingly, it will turn out in\todod{ changed this piece}
Theorem~\ref{canonical_universal} that indeed the $n$-universal model is also in the \NNIL-case isomorphic to the $n$-canonical model.


The  mappings appropriate to the logic or the fragment play an important role in universal models. In general $n$-universal models have the property that any finite $n$-model is connected by such a map to a unique generated submodel of the universal model. This then gives a uniqueness property to the universal model and will imply that it is in a certain sense the smallest. For {\sf IPC} itself these mappings are the p-morphisms, for \NNIL their role will be taken over by the color-preserving monotonic maps.

All this means that it is often advantageous to see the $n$-universal model not as one model but as the collection of all of its point-generated submodels, ordered by the generated \todod{added }submodel relation. In fact, the latter was the only point of view in the proto-universal models of~\cite{deJ68}. These different views of the model are pertinent in the case of the $n$-universal model for \NNIL-formulas about to be constructed. In particular, every  node in our $n$-universal model will be associated with a local \todod{added} tree that is mostly very different from the submodel generated by the node: the ordering of the local models
is no longer the generated submodel relation, and thereby the isomorphism between the local model and the submodel of the universal model generated by it has been lost (although it can be recovered to a certain extent, see~\Cref{prop:properties Tn2}). An arbitrary finite $n$-tree will have a unique map to such a local tree  in the $n$-universal model only. In fact, 
as we will show in \Cref{Tn_turth_thm},
it can be monotonically mapped back and forth by color-preserving maps into that unique  local tree. 
\\



Let us first briefly recall the construction of the $n$-universal model $\U(n)$ of {\sf IPC} (see e.g.~\cite{deJYang}). 
The construction of the $n$-universal model for $\NNIL$-formulas to be given in \Cref{univnn}, though more complex, basically follows the same strategy. 

\begin{definition}\label{def:unipc}
The $n$-universal model  $\U(n)=(W,R,V)$ of {\sf IPC} is  defined inductively in layers as follows.
\begin{itemize}
\item The first layer consists of  nodes of the $2^n$ distinct $n$-colors.
\item Assume that the $\leq\!m$th layers have been defined already. We define the $(m+1)$th layer as follows:
\begin{itemize}
\item For each element $w$ in the $m$th layer, and each color $c<col(w)$, add a new node $u$ to layer $m+1$ with color $c$ and with $w$ the only immediate successor of $u$.
\item For each set $X=\{{w_1},\dots,{w_k}\}$ ($k\geq 2$)  of pairwise $R$-incomparable elements in layers $\leq m$ containing at least one member from layer $m$, and each color $c$ less than or equal to the color of all nodes in $X$, add a new node $w$ to layer $m+1$ with color $c$ and $w_1,\dots,w_k$ as immediate successors. 
\end{itemize}
\end{itemize}
\end{definition}

The $1$-universal model of \textsf{IPC} is also known as Rieger-Nishimura Ladder \cite{Nis60,Rie49}. See  \Cref{fig:u2ipc} for a fragment of the $2$-universal model $\U(2)$. 

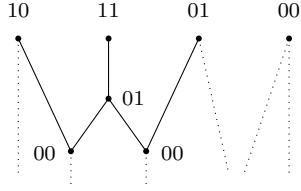
\begin{figure}[t]
\begin{center}
\begin{tikzpicture}[transform shape, scale=1]
\draw (-1.2,0) node[inner sep=0.8pt, circle, fill, label={[shift={(0,0.1)}]{\scriptsize 10}}] {} -- (-.5,-1.5) node[inner sep=0.8pt, circle, fill, label={[shift={(-0.35,-0.3)}]{\scriptsize 00}}] {};

\draw (-.5,-1.5) -- (0,-.8) node[inner sep=0.8pt, circle, fill, label=right:{\scriptsize 01}] {} -- (0,0) node[inner sep=0.8pt, circle, fill, label={[shift={(0,.1)}]{\scriptsize 11}}] {};

\draw  (0,-.8)  -- (.5,-1.5) node[inner sep=0.8pt, circle, fill, label={[shift={(0.35,-0.3)}]{\scriptsize 00}}] {} -- (1.2,0) node[inner sep=0.8pt, circle, fill, label={[shift={(0,0.1)}]{\scriptsize 01}}] {};

\draw[dotted]  (2.4,0) node[inner sep=0.8pt, circle, fill, label={[shift={(0,0.1)}]{\scriptsize 00}}] {} -- (2.4,-1.8);

\draw[dotted]  (2.4,0) -- (1.8,-1.8);

\draw[dotted]  (1.2,0)  -- (1.6,-1.8);

\draw[dotted]  (-1.2,0)  -- (-1.2,-1.8);

\draw[dotted]  (.5,-1.5) -- (.5,-2);
\draw[dotted]  (-.5,-1.5) -- (-.5,-2);



\end{tikzpicture}
\caption{A fragment of $\U(2)$}
\label{fig:u2ipc}
\end{center}
\end{figure}

We now construct the $n$-universal model $\Tn$ of \NNIL-formulas. It will turn out that this model is also the $n$-universal model of $\mon$-formulas.
The nodes in our universal model $\Tn$ will be  finite tree-like models  themselves, and we thus denote them as $T_w$, $T_u$, etc. The reader may think of $w,u$ as elements from $\U(n)$, and $T_w$ and $T_u$ as the unravelings of $\mathcal{U}(n)_w$ and $\mathcal{U}(n)_u$ to trees; in particular, the color of the root of $T_w$ is taken to be $col(w)$. 
We will  not use here other facts though concerning this ``embedding" of $\Tn$ into $\U(n)$. Each chain in the $T_w$'s will be strictly decreasing in color, but not all such nodes in $\U(n)$ will participate in $\Tn$.
\todof{Changed this piece, and moved the definition of $\leq$ forward.} The ordering in our $\Tn$ will not be the usual generated submodel ordering but the $\leq$ relation between models induced by color-preserving monotonic maps. 
We write $\M\equiv \f{N}$ if both $\M\leq \N$ and $\N\leq \M$. 
It is easy to verify that the relation $\leq$ is reflexive and transitive, and thus 
$\equiv$ is an equivalence relation. \todof{I've deleted one sentence here (see the source codes), as I don't see the point of the sentence.}




\begin{definition}\label{univnn}
The $n$-model $\Tn=(W,\leq,V)$ is defined  as follows: 
\begin{itemize}
\item The domain $W$ is defined inductively in layers:
\begin{itemize}
\item The first layer consists of nodes (or trees of single nodes) of the $2^n$ distinct $n$-colors.
\item Assume that the $\leq\!m$th layers have been defined already. We define the $(m+1)$th layer as follows:
For every set $X=\{T_{w_1},\dots,T_{w_k}\}$  of pairwise $\leq$-incomparable  trees in layers $\leq m$ containing at least one member of layer $m$, and every color $c$  strictly smaller than all the colors  at the roots of the trees in $X$,
 build a tree $T_w$ by taking the disjoint union of the trees in $X$ and adding a fresh root $w$ of color $c$ below. Then add $T_w$ as a new node to layer $m+1$.

\end{itemize}
\item Order (the trees in) $W$ by the $\leq$ relation. 
\item The color of a node $T_w$ in $W$ is defined as the color of the root $w$ in the tree $T_w$.  

\end{itemize}
\end{definition}

\afterpage{\clearpage}
\begin{figure}[!th]
\begin{center}
\begin{tikzpicture}[transform shape, scale=1]

\draw (-4.5,9) -- (-5.8,7);
\draw (-4.5,9) -- (-2.7,7);
\draw (-1.5,9) -- (-5.8,7.2);
\draw (-1.5,9) -- (5.8,7.2);
\draw (-1.5,9) -- (0,7.2);
\draw (1.5,9) -- (-2.7,7.3);
\draw (1.5,9) -- (-0.1,7.2);
\draw (1.5,9) -- (2.7,7.4);
\draw (4.5,9) -- (5.8,7);
\draw (4.5,9) -- (2.7,7);

\draw (-5.8,6.3) -- (-4.5,0.8);
\draw (5.8,6.3) -- (4.5,0.8);
\draw (-2.9,6.3) -- (0,4.5);
\draw (-2.9,6.3) -- (-3.1,4.5);
\draw (2.9,6.3) -- (0,4.5);
\draw (2.9,6.3) -- (3.1,4.5);
\draw (0,6.3) -- (-2.8,4.3);
\draw (0,6.3) -- (2.8,4.3);

\draw (-3.3,2.7) -- (0,1);
\draw (3.3,2.7) -- (0,1);
\draw (-3.3,2.7) -- (-4,1.2);
\draw (3.3,2.7) -- (4,1.2);

\draw (-4,-0.4) -- (-2,-2);
\draw (0,0) -- (-2.5,-2);
\draw (4,-0.4) -- (2,-2);
\draw (0,0) -- (2.5,-2);

\draw (-2.5,-3) -- (1,-5.2);
\draw (2.5,-3) -- (-1,-5.2);


\path[fill=gray!20] (-4.4,9.5) ..
                 controls +(-26:0.5cm) and +(5:0.3cm) 
.. (-4.4, 8.7)
..                 controls  +(180:0.8cm) and +(160:0.8cm)
.. cycle;
\draw (-4.5,9) node[inner sep=0.8pt, circle, fill, label={above:{\scriptsize 10}}] {};

\path[fill=gray!20] (-1.4,9.5) ..
                 controls +(-26:0.5cm) and +(5:0.3cm) 
.. (-1.4, 8.7)
..                 controls  +(180:0.8cm) and +(160:0.8cm)
.. cycle;
\draw (-1.5,9) node[inner sep=0.8pt, circle, fill, label={above:{\scriptsize 11}}] {};

\path[fill=gray!20] (1.6,9.5) ..
                 controls +(-26:0.5cm) and +(5:0.3cm) 
.. (1.6, 8.7)
..                 controls  +(180:0.8cm) and +(160:0.8cm)
.. cycle;
\draw (1.5,9) node[inner sep=0.8pt, circle, fill, label={above:{\scriptsize 00}}] {};

\path[fill=gray!20] (4.6,9.5) ..
                 controls +(-26:0.5cm) and +(5:0.3cm) 
.. (4.6, 8.7)
..                 controls  +(180:0.8cm) and +(160:0.8cm)
.. cycle;
\draw (4.5,9) node[inner sep=0.8pt, circle, fill, label={above:{\scriptsize 01}}] {};


\path[fill=gray!40] (-5.7,5.8) ..
                 controls +(5:0.5cm) and +(-5:1cm) 
.. (-5.7, 7.5)
..                 controls  +(180:1.2cm) and +(180:0.5cm)
.. cycle;
\draw (-5.8,6.3) node[inner sep=0.8pt, circle, fill, label={[shift={(0.2,-0.45)}]{\scriptsize 10}}] {} -- (-5.8,7) node[inner sep=0.8pt, circle, fill, label={[shift={(0.2,-0.05)}]{\scriptsize 11}}] {};

\path[fill=gray!40] (-2.8,5.8) ..
                 controls +(5:0.5cm) and +(-5:1cm) 
.. (-2.8, 7.5)
..                 controls  +(180:1.2cm) and +(180:0.5cm)
.. cycle;
\draw (-2.9,6.3) node[inner sep=0.8pt, circle, fill, label={[shift={(0.2,-0.45)}]{\scriptsize 00}}] {} -- (-2.9,7) node[inner sep=0.8pt, circle, fill, label={[shift={(0.2,-0.05)}]{\scriptsize 10}}] {};

\path[fill=gray!40] (0.1,5.8) ..
                 controls +(5:0.5cm) and +(-5:1cm) 
.. (0.1, 7.5)
..                 controls  +(180:1.2cm) and +(180:0.5cm)
.. cycle;
\draw (0,6.3) node[inner sep=0.8pt, circle, fill, label={[shift={(0.2,-0.45)}]{\scriptsize 00}}] {} -- (0,7) node[inner sep=0.8pt, circle, fill, label={[shift={(0.2,-0.05)}]{\scriptsize 11}}] {};

\path[fill=gray!40] (3,5.8) ..
                 controls +(5:0.5cm) and +(-5:1cm) 
.. (3, 7.5)
..                 controls  +(180:1.2cm) and +(180:0.5cm)
.. cycle;
\draw (2.9,6.3) node[inner sep=0.8pt, circle, fill, label={[shift={(0.2,-0.45)}]{\scriptsize 00}}] {} -- (2.9,7) node[inner sep=0.8pt, circle, fill, label={[shift={(0.2,-0.05)}]{\scriptsize 01}}] {};

\path[fill=gray!40] (5.9,5.8) ..
                 controls +(5:0.5cm) and +(-5:1cm) 
.. (5.9, 7.5)
..                 controls  +(180:1.2cm) and +(180:0.5cm)
.. cycle;
\draw (5.8,6.3) node[inner sep=0.8pt, circle, fill, label={[shift={(0.2,-0.45)}]{\scriptsize 01}}] {} -- (5.8,7) node[inner sep=0.8pt, circle, fill, label={[shift={(0.2,-0.05)}]{\scriptsize 11}}] {};


\path[fill=gray!40] (-3.3,4.6) ..
                 controls(-1.7,4.6) and  (-2.4, 2.3)
.. (-3.3, 2.2)
..                 controls(-4.2, 2.3) and (-5,4.6)
.. cycle;
\draw (-2.9,3.7) node[inner sep=0.8pt, circle, fill, label={[shift={(0.1,0)}]{\scriptsize 11}}] {} -- (-3.3,3) node[inner sep=0.8pt, circle, fill, label=below:{\scriptsize 00}] {}  -- (-3.7,3.7)
 node[inner sep=0.8pt, circle, fill, label={[shift={(-0.1,0)}]{\scriptsize 10}}] {};

\path[fill=gray!40] (0,4.6) ..
                 controls(-1.6,4.6) and  (-0.9, 2.3)
.. (0, 2.2)
..  controls(0.9, 2.3) and (1.6,4.6) 
.. cycle;
\draw (-0.4,3.7) node[inner sep=0.8pt, circle, fill, label={[shift={(-0.1,0)}]{\scriptsize 10}}] {} -- (0,3) node[inner sep=0.8pt, circle, fill, label=below:{\scriptsize 00}] {}  -- (0.4,3.7)
node[inner sep=0.8pt, circle, fill, label={[shift={(0.1,0)}]{\scriptsize 01}}] {};

\path[fill=gray!40] (3.3,4.6) ..
                 controls(1.7,4.6) and  (2.4, 2.3)
.. (3.3, 2.2)
..    controls(4.2, 2.3) and (5,4.6) 
.. cycle;
\draw (2.9,3.7) node[ inner sep=0.8pt, circle, fill, label={[shift={(-0.1,0)}]{\scriptsize 11}}] {} -- (3.3,3) node[inner sep=0.8pt, circle, fill, label=below:{\scriptsize 00}] {}  -- (3.7,3.7)
 node[inner sep=0.8pt, circle, fill, label={[shift={(0.1,0)}]{\scriptsize 01}}] {};

\draw (0,2.2) -- (0,1.45);


\path[fill=gray!40] (-1, 1.4) ..
                 controls +(15:0.5cm) and  +(140:1cm)
.. (1.2, 1)
 ..
                 controls +(-45:0.8cm) and  +(0:0.8cm)
.. (0.1, -0.6)  
 ..
                 controls +(180:1cm) and +(195:1.2cm) 
.. cycle;

\draw (0,0)  -- (0,0.7)
node[pos=0, inner sep=0.8pt, circle, fill, label=below:{\scriptsize 00}] {} node[pos=1, inner sep=0.8pt, circle, fill, label=above:{\scriptsize 11}] {};
\draw (0.7,0.7) node[inner sep=0.8pt, circle, fill, label={[shift={(0.1,0)}]\scriptsize 01}] {} -- (0,0)  -- (-0.7,0.7)
node[inner sep=0.8pt, circle, fill, label={[shift={(-0.1,0)}]{\scriptsize 10}}] {};

\path[fill=gray!80] (-4.2, 1.3) ..
                 controls(-5.5,1.1) and  (-4.7, -0.6)
.. (-4.2, -0.7)
 ..                 controls(-3.2, -0.9) and (-2.7,1.6) 
.. cycle;
\draw (-4.3,-0.4)  -- (-4.3,1)
node[pos=0, inner sep=0.8pt, circle, fill, label={[shift={(0.35,-0.3)}]{\scriptsize 00}}] {} node[pos=0.5, inner sep=0.8pt, circle, fill, label=right:{\scriptsize 10}] {} node[pos=1, inner sep=0.8pt, circle, fill, label={[shift={(0.35,-0.2)}]{\scriptsize 11}}] {};

\path[fill=gray!80] (4.4, 1.3) ..
                 controls(3.1,1.1) and  (3.9, -0.6)
.. (4.4, -0.7)
 ..
                 controls(5.4, -0.9) and (5.9,1.6) 
.. cycle;
\draw (4.3,-0.4)  -- (4.3,1)
node[pos=0, inner sep=0.8pt, circle, fill, label={[shift={(0.35,-0.3)}]{\scriptsize 00}}] {} node[pos=0.5, inner sep=0.8pt, circle, fill, label=right:{\scriptsize 01}] {} node[pos=1, inner sep=0.8pt, circle, fill, label={[shift={(0.35,-0.2)}]{\scriptsize 11}}] {};


\path[fill=gray!80] (-4.2, -1.7) ..
                 controls(-3.3,-1.3) and  (-1.8, -1.5)
.. (-1.2, -1.9)
 ..               controls(-0.3,-2.5) and  (-1.2, -4)
.. (-2.5, -4)  
..
                 controls (-4, -3.9) and (-5.3,-2.2)
.. cycle;

\draw (-2.1,-2.8) node[inner sep=0.8pt, circle, fill, label={[shift={(0.35,-0.2)}]{\scriptsize 01}}] {} -- (-2.5,-3.5)  -- (-3.3,-2.1)
node[pos=0, inner sep=0.8pt, circle, fill, label={[shift={(0.35,-0.3)}]{\scriptsize 00}}] {} node[pos=0.5, inner sep=0.8pt, circle, fill, label=left:{\scriptsize 10}] {} node[pos=1, inner sep=0.8pt, circle, fill, label={[shift={(-0.35,-0.2)}]{\scriptsize 11}}] {};

\path[fill=gray!80] (4.2, -1.7) ..
                 controls(3.3,-1.3) and  (1.8, -1.5)
.. (1.2, -1.9)
 ..               controls(0.3,-2.5) and  (1.2, -4)
.. (2.5, -4)  
..
                 controls (4, -3.9) and (5.3,-2.2)
.. cycle;

\draw (2.1,-2.8) node[inner sep=0.8pt, circle, fill, label={[shift={(-0.35,-0.2)}]{\scriptsize 10}}] {} -- (2.5,-3.5)  -- (3.3,-2.1)
node[pos=0, inner sep=0.8pt, circle, fill, label={[shift={(0.35,-0.3)}]{\scriptsize 00}}] {} node[pos=0.5, inner sep=0.8pt, circle, fill, label=right:{\scriptsize 01}] {} node[pos=1, inner sep=0.8pt, circle, fill, label={[shift={(0.35,-0.2)}]{\scriptsize 11}}] {};


\path[fill=gray!80] (1.5, -4.8) ..
                 controls(0.3,-4.2) and  (-1.3, -4.4)
.. (-1.7, -4.8)
 ..               controls(-2.6,-5.5) and  (-1.7, -7)
.. (-0.1, -7.3)  
..
                 controls +(-5:1.2cm) and +(-30:1.8cm)
.. cycle;
\draw (-1.2,-5.3) node[inner sep=0.8pt, circle, fill, label={[shift={(-0.35,-0.2)}]{\scriptsize 11}}] {}   -- (0,-6.8) node[pos=0.5, inner sep=0.8pt, circle, fill, label=left:{\scriptsize 10}] {} node[pos=1, inner sep=0.8pt, circle, fill, label=below:{\scriptsize 00}] {} -- (1.2,-5.3) node[pos=0.5, inner sep=0.8pt, circle, fill, label=right:{\scriptsize 01}] {} node[pos=1, inner sep=0.8pt, circle, fill, label={[shift={(0.35,-0.2)}]{\scriptsize 11}}] {};


\end{tikzpicture}
\caption{$\mathcal{T}(2)$}
\label{fig:u2}
\end{center}
\end{figure}
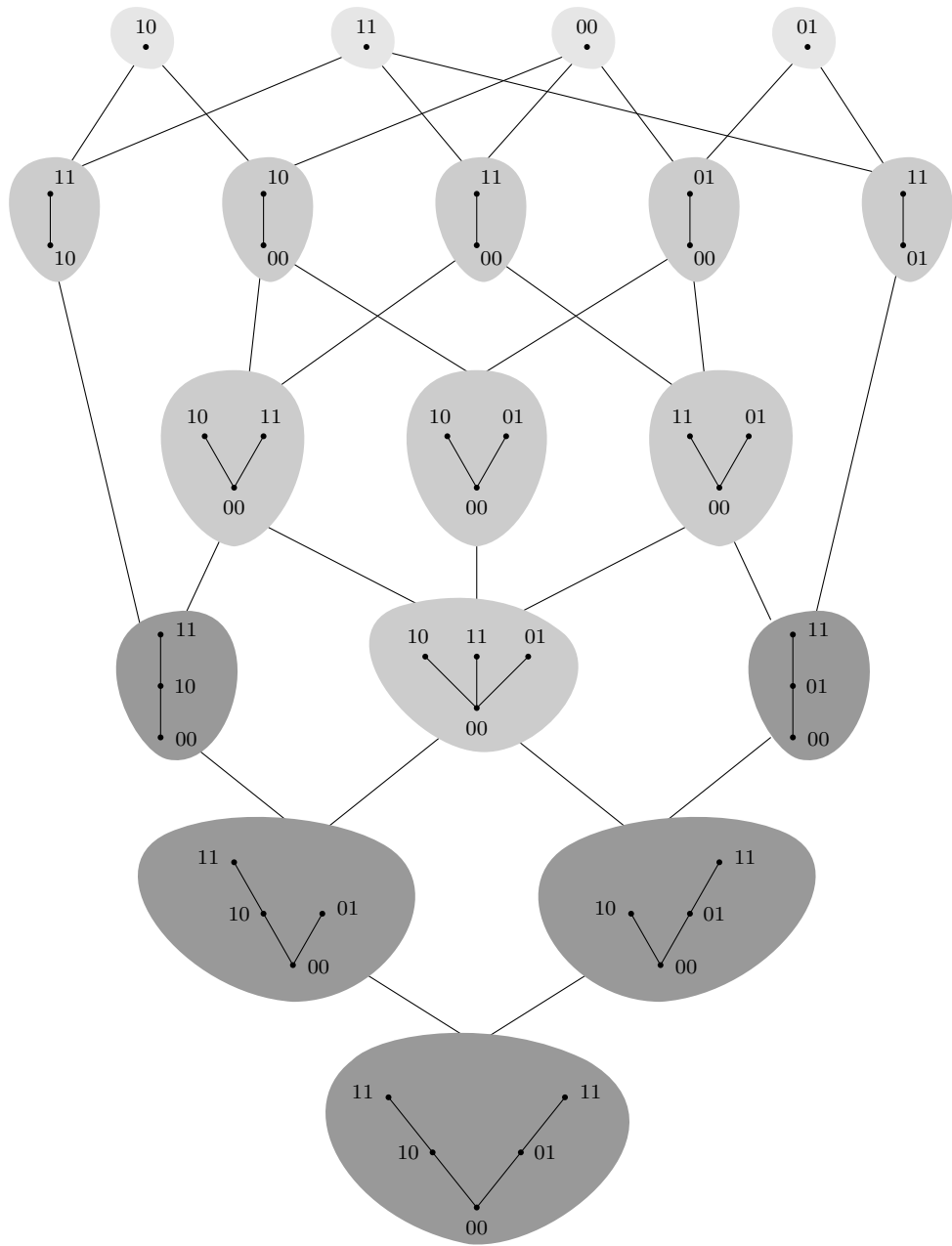


The colors in $\Tn$ are easily seen to be persistent. In the next proposition we verify that $T(n)$ does not contain two $\equiv$-equivalent distinct nodes, which will imply that $\leq$ in $\Tn$ is a partial order as it should be. 
See \Cref{fig:u2} for $\mathcal{T}(2)$ with three layers: The first layer consists of all the single-point trees (indicated in the figure with the lightest shades). The second layer consists of all the nodes with darker shades, while the third layer consists of all the nodes with the darkest shades. 
An easy inductive argument shows that the $m$th layer of $\Tn$ contains trees $T_w$ only of depth $m$. On the other hand, since the relation of $\Tn$ is the $\leq$ relation (instead of the usual generated submodel relation), the depth of a tree $T_w$ regarded as a node in $\Tn$ is often larger, as demonstrated clearly in \Cref{fig:u2}. 



\begin{proposition} \label{prop:properties Tn}
Let $T_w, T_u \in \Tn$. 
\begin{enumerate}
\item If $f: T_w \rightarrow  T_w$ is monotonic and color-preserving, then $f$ is the identity map on $T_w$. \label{a}
\item\label{prop:properties Tn_itm2} If $T_w \neq T_u$, then $T_w \not \equiv T_u$. \label{b}
\end{enumerate}
\end{proposition}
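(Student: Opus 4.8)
The plan is to prove the first statement by induction on the depth of $T_w$, and then to derive the second as a quick consequence via composition of the witnessing maps, together with a parallel ``isomorphic implies equal'' induction. The one structural fact I would record at the outset is the color geometry of the trees in $\Tn$: by \Cref{univnn} a fresh root is always given a color \emph{strictly} smaller than every color occurring in the subtrees placed above it, so an easy induction shows that in any $T_w\in\Tn$ the color strictly increases along every $R$-chain. In particular the root $r$ of $T_w$ is the unique $R$-minimum, $col(r)$ is strictly below $col(x)$ for every other node $x$, and hence $r$ is the \emph{only} node of $T_w$ carrying the color $col(r)$.

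For the first statement, let $f\colon T_w\to T_w$ be monotonic and induct on the depth of $T_w$. The base case (a single node) is trivial. For the inductive step, since $f$ preserves colors and $r$ is the unique node of color $col(r)$, we get $f(r)=r$. Let $s_1,\dots,s_k$ be the immediate successors of $r$, i.e.\ the roots of the subtrees $T_{w_1},\dots,T_{w_k}$ from which $T_w$ was built. Fix $i$. Since $col(s_i)>col(r)$ and $r=f(r)\,R\,f(s_i)$, the node $f(s_i)$ is a proper successor of $r$ and hence lies in some subtree $T_{w_j}$; as every node of $T_{w_i}$ lies above $s_i$, $f$ is order preserving, and $T_{w_j}$ is upward closed in $T_w$, the whole of $T_{w_i}$ is mapped into $T_{w_j}$. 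This is a monotonic map of $T_{w_i}$ into $T_{w_j}$, which by the definition of $\leq$ means $T_{w_j}\leq T_{w_i}$. Because the subtrees $T_{w_1},\dots,T_{w_k}$ are pairwise $\leq$-incomparable by construction, this forces $j=i$; thus $f$ restricts to a monotonic self-map of each $T_{w_i}$, which is the identity by the induction hypothesis. Together with $f(r)=r$ this gives $f=\mathrm{id}_{T_w}$. I expect the step ``$f$ cannot permute the subtrees'', where incomparability is used, to be the crux of the whole proof.

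For the second statement I would argue by contraposition. Unwinding the definition, $T_w\equiv T_u$ yields monotonic maps $g\colon T_w\to T_u$ and $h\colon T_u\to T_w$. Then $h\circ g$ and $g\circ h$ are monotonic self-maps, so by the first statement they equal $\mathrm{id}_{T_w}$ and $\mathrm{id}_{T_u}$ respectively; hence $g$ is a bijection whose inverse $h$ is also monotonic, i.e.\ an isomorphism of models. It then remains to show that isomorphic trees in $\Tn$ are equal, which I would prove by a second induction on depth mirroring the construction: an isomorphism sends root to root (the unique $R$-minima), so the root colors agree, and it sends the immediate-successor subtrees of $T_w$ isomorphically onto those of $T_u$; by the induction hypothesis these corresponding subtrees are equal, so $T_w$ and $T_u$ arise from the same set $X$ and the same root color in \Cref{univnn} and are therefore the same node (distinct depths cause no clash, since the $m$th layer contains only trees of depth $m$). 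Hence $T_w=T_u$, as required.
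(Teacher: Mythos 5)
Your proof is correct and follows essentially the same route as the paper's: induction on depth using that the root is the unique node of its color and that the subtrees above it are pairwise $\leq$-incomparable for item (1), then composing the two witnessing maps and invoking (1) for item (2). The only difference is that you spell out the final ``isomorphic trees in $\Tn$ are equal'' step by a second induction, which the paper leaves implicit; that is a reasonable (and slightly more careful) elaboration, not a different approach.
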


\begin{proof}
(1) We show the claim by induction on $d(T_w)$. If $d(T_w)=1$ the claim is obvious. Now let $d(T_w)>1$ and let $f: T_w \rightarrow T_w$ be a color-preserving monotonic map. Suppose $T_w$ is constructed from the set $X=\{T_{w_1}, \dots T_{w_k}\}$ of pairwise $\leq$-incomparable trees in $\Tn$ by adding a fresh root $w$ below. By construction no element in $X$ has a node of the color of $w$, thus we must have that $f(w)= w$, and $f(w_i)\neq w$ for each $1\leq i\leq k$. The latter implies that  for some $1 \leq j \leq k$, $f[T_{w_i}]$ is a submodel of $T_{w_j}$, or $T_{w_i}\leq T_{w_j}$, which can only happen when $T_{w_i}= T_{w_j}$, as  distinct elements in $X$ are pairwise $\leq$-incomparable. This means that the restricted map $f{\upharpoonright}_{T_{w_i}}{:}~~T_{w_i} \rightarrow T_{w_i}$ is monotonic and color-preserving, and therefore the identity map by induction hypothesis.  We have thus shown that $f$ restricted to all elements in $X$ is the identity map. Together with the fact that $f(w)= w$, we conclude that $f:T_w \rightarrow T_w$ itself is the identity map.


(2) 
Suppose $T_w \equiv T_u$, i.e.~there are color-preserving monotonic maps $f: T_w \rightarrow T_u$ and $g: T_u \rightarrow T_w$. Then $g \circ f : T_w \rightarrow T_w $ and $f \circ g: T_u \rightarrow T_u$ are color-preserving monotonic maps, and thus the identity maps by item \eqref{a}.
So $g$ is a bijective monotonic map with a monotonic inverse. It is a well-known property of partial orders that in this case $T_w$ is isomorphic to $T_u$, and so $T_w= T_u$. 
\end{proof}

As we commented already, every tree $T_w$ in $\Tn$ can be viewed as an unraveling of some generated submodel in $\U(n)$ with each chain strictly decreasing in color; for instance, the right immediate successor of the root of $\mathcal{T}(2)$ in  \Cref{fig:u2} corresponds to the submodel of $\U(2)$ generated by the node $w$ in \Cref{fig:u2ipc}. Meanwhile, the submodel of $\U(2)$ generated by the node $u$, though with every chain strictly decreasing in color, is not present in $\mathcal{T}(2)$, because, e.g., the two subtrees generated by the two immediate successors of $u$ drawn in \Cref{fig:u2ipc} are not $\leq$-incomparable. 



One crucial property of our $n$-universal model $\Tn$ is that it is finite and rooted. 
This is clearly illustrated in \Cref{fig:u2} of $\mathcal{T}(2)$, where the root is the {\em universal $2$-tree}.
Consider the $n$-model $\mathfrak{C}_n=(2^n,\leq,V_n)$ whose domain is the set of all $n$-colors ordered by the partial order $\leq$ of colors, and the color of a node is identical to the node itself. We call  the {\em universal $n$-tree}, denoted by $T_{\mathbf{0}}$,  the unraveling   of $\mathfrak{C}_n$ with root $\mathbf{0}=0\cdots 0$. \todof{changed}

\begin{fact}
$\Tn$ is finite and rooted with root $T_{\mathbf{0}}$.
\end{fact}
\begin{proof}
We first show that $\Tn$ is finite. Since every chain in a tree $T_w$ in $\Tn$ is  strictly decreasing in color, every tree $T_w$  has depth at most $n+1$, which also means that $\Tn$ has at most $n+1$ layers. Each layer is also clearly finite, as essentially there are only finitely many $n$-colors  strictly smaller than one fixed $n$-color.

Next, we show that $T_{\mathbf{0}}$ is in $\Tn$. Clearly, every chain in $T_{\mathbf{0}}$ is strictly decreasing in color, and any two subtrees generated by nodes with the same parent are $\leq$-incomparable (as the two roots have different colors). Thus, an easy inductive argument shows  that every generated subtree in every layer in $T_{\mathbf{0}}$ belongs to $\Tn$, thereby $T_{\mathbf{0}}$ itself is in $\Tn$. 

Finally, we show that $T_{\mathbf{0}}$ is the root of $\Tn$, that is, $T_{\mathbf{0}}\leq T_w$ for every tree $T_w$ in $\Tn$. First note that every two subtrees of $T_{\mathbf{0}}$ generated by two points of the same color $c$ are isomorphic, as they are both the unraveling of the submodel of $\mathfrak{C}_n$ generated by $c$. One can then easily show by induction on the depth of $T_w$ that $T_{c}\leq T_w$ for any subtree $T_{c}$ of $T_{\mathbf{0}}$ generated by a point $c$ with the same color as $w$. Since $T_{\mathbf{0}}\leq T_{c}$, we obtain finally $T_{\mathbf{0}}\leq T_w$. \todof{rewritten and simplified}
%
\end{proof}

As pointed out already, the trees $T_w$ are in general not isomorphic to the submodel of $\Tn$ generated by the node $T_w$. But every node $v$ in a tree $T_w$ does generate a tree $T_v$ from $\Tn$, as the following fact shows. 

\begin{fact}\label{Tw_v_Tn}
If $T_w\in\Tn$ and $v\in T_w$, then the subtree $T_v$ of $T_w$ belongs to $\Tn$. \todof{Made it into a fact, in order to satisfy referee 1. This was not completely clear to me either...}
\end{fact}
\begin{proof}
We prove the fact by induction on the layer in $\Tn$ that $T_w$ belongs to. If $T_w$ is a tree of a single node in the first layer, the fact trivially holds. If $T_w$ is in layer $m+1$, then by the construction of $\Tn$, every immediate successor $v$ of $w$ in $T_w$ generates a tree $T_v$ in $\Tn$. Since $T_v$ is in layer $\leq m$, by induction hypothesis, every node $u$ in $T_v$ generates a tree $T_u$ in $\Tn$.
\end{proof}


Since the nodes in $\Tn$ are models themselves, a formula $\varphi$ can be evaluated at a point $T_w$ of $\Tn$ in two ways: Either  in the model $T_w$ (at its root $w$), or  in the universal model $\Tn$ at the node $T_w$. The next proposition shows that the truth values of $\NNIL$- or $\mon$-formulas $\varphi$ in $n$ variables for both ways of evaluation actually coincide. 
Hereafter we will then use the notation $T_w \models \varphi$ for either $\Tn, T_w \models \varphi$ or $T_w, w \models \varphi$. 

\begin{proposition} \label{prop:properties Tn2}
For any $T_w \in \Tn$, we have $(T_w, w) \simeq_{\NNIL} (\mathcal T (n), T_w) $, and thus $(T_w, w) \simeq_{\mon} (\mathcal T (n), T_w)$ as well.
\end{proposition}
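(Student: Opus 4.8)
The plan is to prove by induction on the complexity of $\varphi$, taken in {\sf NNIL} normal form, that $T_w, w \models \varphi$ iff $\Tn, T_w \models \varphi$, holding \emph{simultaneously for every} $T_w \in \Tn$. The cases $\varphi = \bot$ and $\varphi = p$ are immediate: both readings reduce to the color of the root $w$, which by definition is exactly the color assigned to the node $T_w$ in $\Tn$. The cases $\varphi = \psi_1 \wedge \psi_2$ and $\varphi = \psi_1 \vee \psi_2$ follow at once from the induction hypothesis. All the content sits in the case $\varphi = p \to \psi$, where the tree-internal successors of $w$ and the $\leq$-successors of $T_w$ diverge.

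Two structural facts will drive that case. First, for every node $v$ of the tree $T_w$ the generated subtree $T_v$ is again a node of $\Tn$, the inclusion $T_v \hookrightarrow T_w$ is monotonic, so $T_w \leq T_v$ and hence $T_v$ is an $\leq$-successor of $T_w$ in $\Tn$; moreover, by invariance of truth under generated submodels, $T_w, v \models \chi$ iff $T_v, v \models \chi$ for every formula $\chi$. Second, and this is the source of the difficulty, the $\leq$-successors $T_u$ of $T_w$ in $\Tn$ are \emph{all} trees admitting a monotonic map into $T_w$, a strictly larger family than the subtrees $T_v$.

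For the implication from $\Tn, T_w \models p \to \psi$ to $T_w, w \models p \to \psi$ I only need the subtrees: given a tree-successor $v$ of $w$ with $T_w, v \models p$, the first fact together with the induction hypothesis for $p$ turns this into $\Tn, T_v \models p$, so the assumed implication at $T_w$ yields $\Tn, T_v \models \psi$, and the induction hypothesis for $\psi$ returns $T_w, v \models \psi$. The converse direction is the crux. Assuming $T_w, w \models p \to \psi$, I take an arbitrary $\leq$-successor $T_u$ with $\Tn, T_u \models p$, fix a monotonic map $g \colon T_u \to T_w$ witnessing $T_w \leq T_u$, and use persistence to obtain $T_w, g(u) \models p \to \psi$. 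Since $p \to \psi$ is a \NNIL-formula, \Cref{nnil} transports this backwards along $g$ to $T_u, u \models p \to \psi$; combined with $T_u, u \models p$ (the induction hypothesis for $p$) and reflexivity at the root $u$, this gives $T_u, u \models \psi$, whence $\Tn, T_u \models \psi$ by the induction hypothesis for $\psi$.

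The main obstacle is precisely this last step: because the successors of $T_w$ in $\Tn$ are all trees mapping into $T_w$ rather than merely its subtrees, one cannot proceed by a direct combinatorial matching and must instead invoke the backwards preservation of \NNIL-formulas under monotonic maps. Once the \NNIL-equivalence is established, the \mon-equivalence $(T_w, w) \simeq_{\mon} (\Tn, T_w)$ follows immediately from \Cref{prop:equivalent}, since both $T_w$ and $\Tn$ are finite models.
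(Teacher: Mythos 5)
Your proposal is correct and follows essentially the same route as the paper: an induction on \NNIL\ normal form where only the $p\to\psi$ case has content, using $T_v\in\Tn$ and $T_w\leq T_v$ for subtrees in one direction, and persistence plus backwards preservation of \NNIL-formulas along a monotonic map $g\colon T_u\to T_w$ (\Cref{nnil}) in the other, with the \mon-equivalence then obtained from \Cref{prop:equivalent} by finiteness. Your write-up is in fact slightly more explicit than the paper's at the persistence step and in isolating the two structural facts about subtrees.
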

\begin{proof}
Since $\Tn$ and $T_w$ are finite, by \Cref{prop:equivalent}, the result for $\mon$-formulas follows from that for $\NNIL$-formulas. We now prove by induction that $T_w, w\models\varphi$ iff $\mathcal T (n), T_w\models\varphi$ for any $\NNIL$-formula $\varphi$ in $n$ variables in normal form. 

If $\varphi=\bot$, the claim holds trivially. If $\varphi=p$, the claim also holds since $col(w)=col(T_w)$. The induction steps for $\wedge$ and  $\vee$ are easy. We now prove the claim for the case $\varphi= p \rightarrow \psi$.

Suppose $T_w, w \models p \rightarrow \psi$.  To show that  $\Tn, T_w \models p \rightarrow \psi$ let $T_u\in \Tn$ be such that $T_w \leq T_u$ and  $\Tn, T_u \models p$. Then we obtain $T_u, u \models p \rightarrow \psi$ by \Cref{nnil}.  Moreover, since $col(u)=col(T_u)$, we have $T_u, u \models p$, which then implies $T_u, u \models \psi$. Thus, we conclude by induction hypothesis that $\Tn, T_u \models \psi$, as required.

Conversely, suppose $\Tn, T_w \models p\to \psi$. Let $u \in T_w$ be such that $T_w, u \models p$. By \Cref{Tw_v_Tn}, we know that $T_u\in\Tn$. Since $col(u)=col(T_u)$, we have $\Tn,T_u\models p$. As the identity map from $T_u$ into $T_w$ is monotonic and color-preserving, we also have  $T_w\leq T_u$. It then follows that $\Tn, T_u \models \psi$, which, by induction hypothesis, implies that $T_u, u \models \psi$. Again since $T_w\leq T_u$, we obtain $T_w,u\models\psi$ by \Cref{nnil}, and hence $T_w,w\models p\to\psi$.
%
\end{proof}




One can view the trees $T_w$ in $\Tn$ as representatives of $\equiv$-equivalence classes of  $n$-trees, in the sense that every finite tree-like $n$-model is $\equiv$-equivalent to some (unique) tree $T_w$ in $\Tn$. We now prove this important property of $\Tn$.

\begin{lemma}\label{Tn_turth_lem} For every finite  $n$-tree $\mT$, there is a node $T_w$ in $\Tn$ such that 
\begin{enumerate}
\item $T_w\leq_f\mT$ via some surjective color-preserving monotonic map $f$, \todof{shortened this item using the new notation}
\item $T_w$ is isomorphic to a submodel of $\mT$ that has the same root as $\mT$,
\end{enumerate}
\end{lemma}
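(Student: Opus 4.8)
The plan is to induct on the depth of $\mT$, building the witnessing node $T_w\in\Tn$ out of the representatives of the immediate subtrees of $\mT$ supplied by the induction hypothesis. Write $r$ for the root of $\mT$ and $c=col(r)$, and let $r_1,\dots,r_m$ be the immediate successors of $r$, generating the proper subtrees $\mT_1,\dots,\mT_m$ of strictly smaller depth. The base case of a single node of color $c$ is immediate, the single-node tree of color $c$ from the first layer of $\Tn$ witnessing both clauses. In the inductive step I would first apply the induction hypothesis to each $\mT_i$ to obtain a node $T_{v_i}\in\Tn$, a monotonic surjection $g_i\colon\mT_i\twoheadrightarrow T_{v_i}$, and a submodel $S_i\cong T_{v_i}$ of $\mT_i$ rooted at $r_i$.

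Next I would assemble the immediate subtrees of the target. Because colors strictly decrease along the chains of $\Tn$, the successors $r_i$ with $col(r_i)>c$ and those with $col(r_i)=c$ (the only possibilities, by persistence) must be treated differently: for the former I keep $T_{v_i}$, while for the latter, whose representative already has a color-$c$ root, I promote the immediate subtrees of $T_{v_i}$, each of which is again a node of $\Tn$. Gathering these into a set $\mathcal S$ and letting $X$ be its $\leq$-minimal elements --- an antichain, since $\leq$ is a genuine partial order on $\Tn$ by \Cref{prop:properties Tn} --- I would build $T_w$ by adding a fresh root of color $c$ below the disjoint union of the trees in $X$. As every color in every tree of $X$ strictly exceeds $c$, the pair $(X,c)$ is admissible and $T_w\in\Tn$ by \Cref{univnn}; if $\mathcal S=\varnothing$ (i.e.\ $\mT$ is monochromatic) one simply takes $T_w$ to be the single node of color $c$.

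To establish clause (1) I would define $f\colon\mT\to T_w$ by sending $r$ to the new root and, on each $\mT_i$, composing $g_i$ with a monotonic map of $T_{v_i}$ into some minimal $T_x\leq T_{v_i}$ in $X$, flattening the color-$c$ root onto the new root when $col(r_i)=c$. Surjectivity holds because each $T_x\in X$ already belongs to $\mathcal S$ and is therefore the onto image, under $g_i$ or a restriction of it, of a piece of some $\mT_i$. For clause (2) I would take the required submodel to be $\{r\}$ together with one isomorphic copy of each $T_x\in X$ sitting inside the $S_i$'s: when $col(r_i)>c$ this copy is $S_i$, rooted at the immediate successor $r_i$ of $r$; when $col(r_i)=c$ it is the subtree of $S_i$ two levels below $r$, which becomes an immediate successor of $r$ in the submodel once the intermediate node $r_i$ is omitted, using transitivity of $R$.

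The hard part is exactly this promotion forced by the strictly decreasing colors of $\Tn$: a same-color successor cannot sit directly below the new root, so it must be flattened in the surjection of clause (1) and its subtrees relocated, via transitivity of $R$, to immediate successors of $r$ in the submodel of clause (2). The bookkeeping to verify is that the promoted subtrees stay pairwise disjoint and correctly rooted, that no node of the chosen submodel lies strictly between $r$ and a promoted root, and that passing to $\leq$-minimal elements preserves surjectivity; these reduce to $g_i$ being onto each immediate subtree of $T_{v_i}$ and to the rigidity of $\Tn$ recorded in \Cref{prop:properties Tn}, item \eqref{a}.
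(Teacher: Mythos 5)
Your proof is correct, but it organizes the induction around a different decomposition than the paper, and the difference is worth noting. The paper cuts $\mT$ not at the immediate successors of the root $r$ but at the \emph{minimal nodes whose color differs from $col(r)$}: the entire root-colored lower region of $\mT$ is collapsed to the fresh root in a single step, the induction hypothesis is applied to the subtrees generated by those minimal differently-colored nodes, and their $\leq$-minimal representatives can then be placed directly above the new root with no further surgery. Your decomposition at immediate successors forces you to treat the successors of color $c$ separately and to ``promote'' the immediate subtrees of their representatives $T_{v_i}$ --- which does work, since those subtrees are themselves nodes of $\Tn$ with all colors strictly above $c$, so the resulting antichain $X$ is admissible in \Cref{univnn} --- but it costs you the two-case bookkeeping for $\mathcal S$, and it makes surjectivity in clause (1) depend on the rigidity of $\Tn$ (\Cref{prop:properties Tn}(1)) to see that the factored map restricted to a minimal representative is the identity; the paper sidesteps this by sending the minimal representatives directly via the inductive surjections $f_i$ and routing only the non-minimal ones through a comparison map. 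One sentence of yours should be tightened: when $col(r_i)=c$ there is \emph{no} $T_x\in X$ with $T_x\leq T_{v_i}$ (a monotonic map must preserve colors and no tree in $X$ contains the color $c$), so the map on $T_{v_i}$ cannot be ``into some minimal $T_x\leq T_{v_i}$''; it must, as your final paragraph correctly indicates, send the root of $T_{v_i}$ to $w$ and each immediate subtree of $T_{v_i}$ separately into a minimal element of $X$ below it. With that reading, both clauses go through, and the net effect is the same lemma by a slightly longer route; the paper's choice of cut points is precisely what makes its version short.
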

\begin{proof} We prove the lemma by the number of different colors in $\mT$. If all nodes in $\mT$ have one and the same color $c$, then the tree $T_w$ in $\Tn$ consisting of a single node with the color $c$ clearly satisfies conditions (1) and (2). \todof{corrected the proof, as pointed out by referee 2}



Now assume that the nodes in $\mT$ have at least two different colors. Let $x_1,\dots,x_m$ be the minimal nodes in $\mT$ with a color different from the root $r$, and let ${\mT}_1,\dots,{\mT}_m$ be the subtrees in $\mT$ generated by these nodes respectively. Applying the induction hypothesis to these subtrees we get the corresponding trees $T_{w_1},\dots,T_{w_m}$ in $\Tn$  satisfying conditions (1) and (2). Assume without loss of generality that $T_{w_1},\dots,T_{w_k}$ are the minimal elements among $T_{w_1},\dots,T_{w_m}$ with respect to $\leq$ (and are therefore pairwise incomparable). Let $T_w$ be the tree formed by taking the disjoint union of $T_{w_1},\dots,T_{w_k}$ and adding a root $w$ below with $col(r)$ (which is strictly less than all colors occurring in each $T_{w_i}$). Clearly $T_w$ is a node in $\Tn$. We now verify that $T_w$ satisfies conditions (1) and (2).

To see condition (2), by induction hypothesis each $T_{w_i}$ is isomorphic to a submodel of $\mT_i$ with root $x_i$, and $col(w)=col(r)$. Thus the tree $T_w$ is isomorphic to a submodel of $\mT$ with root $r$.

For condition (1), first for each $1\leq i\leq m$, there is a color-preserving monotonic map $f_i$  from $\mT_i$ onto $T_{w_i}$ given by induction hypothesis. Also, for each $k+1\leq i\leq m$, there is $1\leq j_i\leq k$ such that $T_{w_{j_i}}\leq T_{w_i}$ via a color-preserving monotonic map $g_i: T_{w_i}\to T_{w_{j_i}}$. Now,  we define a map $f\!:\,{\mT}\to T_{w}$ by taking
\[f(x)=\begin{cases}
w&\text{ if }col(x)=col(r);\\
f_i(x)&\text{ if }x\in \mT_i\text{ for some }i\leq k;\\
g_i\circ f_i(x)&\text{ if }x\in \mT_i\text{ for some }k+1\leq i\leq m.
\end{cases}\]
It is easy to see that $f$ is monotonic, color-preserving and onto.
%
%
%
\end{proof}

\begin{theorem} 
\label{Tn_turth_thm}
For every finite $n$-tree $\mathfrak T$, there is a unique node $T_w$ in $\mathcal T(n)$ with $\mathfrak T \equiv T_w$, and thus $\mathfrak T\simeq_{\mon}T_w$.

\end{theorem}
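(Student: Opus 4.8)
The plan is to read off the theorem as an essentially immediate consequence of three results already in hand: \Cref{Tn_turth_lem} for existence, \Cref{prop:properties Tn} (item~\eqref{b}) for uniqueness, and the remark recorded just after the definition of $\leq$ for the concluding $\simeq_{\mon}$ clause. The only point needing genuine care is to keep the orientation of $\leq$ straight: recall that $\N\leq\M$ means that $\M$ maps monotonically \emph{into} $\N$, so the two conditions of \Cref{Tn_turth_lem} must be matched to the correct halves of $\equiv$.

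For existence I would apply \Cref{Tn_turth_lem} to the given finite $n$-tree $\mT$, obtaining a node $T_w$ of $\Tn$. Condition~(1) of that lemma supplies a monotonic map from $\mT$ onto $T_w$, which by definition is exactly a witness for $T_w\leq\mT$. Condition~(2) supplies an isomorphism of $T_w$ onto a submodel of $\mT$; since an isomorphism is monotonic and the inclusion of a submodel into the ambient model is monotonic, their composite is a monotonic map from $T_w$ into $\mT$, that is, a witness for $\mT\leq T_w$. Combining the two gives $\mT\equiv T_w$, as required.

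For uniqueness I would first record that $\equiv$ is symmetric by definition and transitive, the latter because $\leq$ is transitive (the composite of two monotonic maps is again monotonic). Hence, if $T_w$ and $T_u$ are nodes of $\Tn$ with $\mT\equiv T_w$ and $\mT\equiv T_u$, then $T_w\equiv T_u$; and item~\eqref{b} of \Cref{prop:properties Tn}, which states that distinct nodes of $\Tn$ are never $\equiv$-equivalent, forces $T_w=T_u$.

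Finally, the clause $\mT\simeq_{\mon}T_w$ follows from the observation already noted after the definition of $\leq$: whenever $\M\equiv\N$ one has $\M\simeq_{\mon}\N$. Concretely, every $\mon$-formula is reflected by monotonic maps, so a monotonic map from $\N$ into $\M$ yields $\M\models\varphi\Rightarrow\N\models\varphi$ for $\varphi\in\mon$ (apply reflection at each point of the image), and the maps guaranteed in both directions by $\mT\equiv T_w$ then give the two-sided equivalence. As for the main obstacle: there is no deep difficulty here, since all the combinatorial content is carried by \Cref{Tn_turth_lem}; the single thing to be careful about is the reversed reading of $\leq$ noted above, so that condition~(1) is not inadvertently used to produce $\mT\leq T_w$ instead of $T_w\leq\mT$.
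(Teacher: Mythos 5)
Your proposal is correct and follows essentially the same route as the paper: existence via \Cref{Tn_turth_lem} (with the two conditions correctly matched, given the reversed reading of $\leq$, to the two halves of $\equiv$), uniqueness via \Cref{prop:properties Tn}\eqref{prop:properties Tn_itm2}, and the $\simeq_{\mon}$ clause via the remark following the definition of $\leq$. You merely spell out a few steps the paper leaves implicit, such as the transitivity of $\equiv$ in the uniqueness argument.
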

\begin{proof} 
 Let $T_w$ be the tree in $\mathcal T(n)$ given by Lemma \ref{Tn_turth_lem}. Then the two conditions in Lemma \ref{Tn_turth_lem} imply immediately $\mT \equiv T_w$. The uniqueness of $T_w$ follows from  \Cref{prop:properties Tn}\eqref{prop:properties Tn_itm2}. 
\end{proof}


Next, we verify that $\Tn$ satisfies the two conditions in \Cref{def:universalmodel} of universal models with respect to $\NNIL$- or  $\mon$-formulas. Since $\NNIL$-formulas are also $\mon$-formulas, we can then conclude that $\Tn$ is a universal model for both $\NNIL$- and $\mon$-formulas. 
Moreover, we  show that $\Tn$ is actually an {\em exact model} for  $\NNIL$- and $\mon$-formulas, which is a universal model with the second condition strengthened to ``for all upsets $U$ of  $\M$ (not necessarily generated by single points), there is $\varphi \in  \Phi$ such that $V(\varphi)= U$" (see~\cite{renardel}).  Note that infinite universal models (such as the $n$-universal models for {\sf IPC}) are in general not exact, whereas $n$-universal models for locally finite fragments of {\sf IPC} often are (\cite{Lex96,renardel}). 

\begin{proposition}\label{lem:universal}\begin{enumerate}
\item\label{lem:universalb} For any $n$-formulas $\varphi,\psi\,{\in}\,\,\mon$, if $\varphi\not\vdash\psi$, then there exists a node in ${\cal T}(n)$ verifying $\varphi$ and falsifying $\psi$. 
\item\label{lem:universald}\label{lem:universalc} For each upset $U$ of $\Tn$, 
 there exists a \NNIL-formula $\beta^+(U)$ such that for each $T_u\in\Tn$, $T_u\models\beta^+(U)$ iff $T_u\in U$. \todof{changed}

In particular, for each node $T_w$ in $\Tn$, there exists a \NNIL-formula $\beta^+(w)$ such that for each $T_u\in\Tn$, $T_u\models\beta^+(w)$ iff $T_w\leq T_u$. 
 
\end{enumerate}
\end{proposition}
\begin{proof}
(1) If $\varphi\not\vdash\psi$, then there is a finite  tree $\mT$ verifying $\varphi$  and refuting $\psi$. 
By  \Cref{Tn_turth_thm}, there is a node $T_w$ in $\mathcal T(n)$ with the same property. 
 
 
(2) Define $\beta^+(U)=\bigwedge\{\beta(v)\mid T_v\in \Tn\text{ and }T_v\notin U\}$. Suppose $T_u\in U$. For any $T_v\in \Tn$ with $T_v\notin U$, we have $T_u\nleq T_v$, which implies, by \Cref{betamono1}, that $T_u\models\beta(v)$. Hence $T_u\models\beta^+(U)$.

Conversely, suppose $T_u\notin U$. Then $\beta(u)$ is a conjunct in $\beta^+(U)$, and since $T_u\not\models\beta(u)$ by \Cref{lem:selfrefutetation}, we obtain $T_u\not\models\beta^+(w)$.
%
%
%
%
\end{proof}

Now that we have proved all these concrete properties of $\Tn$ 
let us remark
that there are alternative ways to see the universal model $\Tn$. For instance, since $\Tn$ and $\mathcal{U}(n)$ are constructed basically by  the same strategy (see Definitions \ref{def:unipc} and \ref{univnn}), one can view $\Tn$ as the set of those unraveled elements $T$ of $\mathcal{U}(n)$ with only strictly color-decreasing chains such that for every node $w$ in $T$, the trees $T_{w_i}$ generated by immediate successors $w_i$ of $w$ are pairwise $\leq$-incomparable. Indeed, it is easy to see by induction that each such unraveled element $T$ of $\mathcal{U}(n)$ belongs to $\Tn$, and conversely each element in $\Tn$ is  an unraveled element $T$ of $\mathcal{U}(n)$ of the described form. \todof{Reformulated a little the definition, Now I think this fact is indeed clear, so I didn't really add the detailed proof (as there's nothing to prove then).}\todof{And deleted the other alternative definition, as you suggested.}


\subsection{Consequences of the universal model}

Having seen that $\Tn$ is the universal model for both  $\NNIL$- and $\mon$-formulas, we now derive in the following three corollaries an even closer relationship of $\NNIL$-formulas with $\mon$-formulas as well as the class $\textsf{B}$  of $\NNIL$-subframe formulas we defined earlier. First, we conclude that \NNIL-formulas are exactly the ones reflected by color-preserving monotonic maps, a result essentially already following from \cite{VNNIL}.

\begin{corollary}
\label{cor:betaaxiomatization}
For any $\mon$-formula $\varphi$, there is a finite set $\textsf{B}_\varphi\subseteq \textsf{B}$ of \NNIL-subframe formulas such that $\vdash \varphi \leftrightarrow \bigwedge_{\beta(w) \in \textsf{B}_\varphi} \beta(w)$. In particular, every $\mon$-formula is equivalent to a \NNIL-formula, and \NNIL-formulas are (up to equivalence) exactly the formulas  reflected by color-preserving monotonic maps.
\end{corollary}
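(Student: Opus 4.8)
The plan is to run the counterexample-completeness of the finite universal model $\Tn$ together with the definability result \Cref{lem:universal}\eqref{lem:universald}. Fix a $\mon$-formula $\varphi$ and choose $n$ so that the variables of $\varphi$ are among $p_1,\dots,p_n$; then $\varphi$ is an $n$-formula and can be evaluated in $\Tn$. Since $\varphi$ is an ordinary intuitionistic formula, its truth set $U:=V(\varphi)=\{T_w\in\Tn\mid T_w\models\varphi\}$ is a $\leq$-upset of $\Tn$ by persistence. (By \Cref{prop:properties Tn2} it is immaterial whether $\varphi$ is read at the root $w$ of $T_w$ or at the node $T_w$ of $\Tn$, so the notation $T_w\models\varphi$ is unambiguous.)

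The first key step is to replace $\varphi$ by a $\NNIL$-formula with the same truth set. Applying \Cref{lem:universal}\eqref{lem:universald} to the upset $U$ produces a $\NNIL$-formula $\beta^+(U)=\bigwedge\{\beta(T_v)\mid T_v\notin U\}$ with $T_u\models\beta^+(U)$ iff $T_u\in U$, for every $T_u\in\Tn$. Hence $V(\beta^+(U))=U=V(\varphi)$ as subsets of $\Tn$. Because $\Tn$ is finite, this conjunction is finite, so $\textsf{B}_\varphi:=\{\beta(T_v)\mid T_v\in\Tn,\ T_v\notin U\}$ is a finite subset of $\textsf{B}$ with $\beta^+(U)=\bigwedge_{\beta(w)\in\textsf{B}_\varphi}\beta(w)$.

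The second key step is to upgrade this equality of truth sets to provable equivalence. Both $\varphi$ and $\beta^+(U)$ are $\mon$-formulas (the latter since $\NNIL\subseteq\mon$ by \Cref{nnil}), and by \Cref{lem:universal}\eqref{lem:universalb} the model $\Tn$ satisfies condition (i) of \Cref{def:universalmodel} for the class $\mon$. By the contrapositive of that condition, noted immediately after \Cref{def:universalmodel}, two $\mon$-formulas with equal truth sets in $\Tn$ are provably equivalent. Applying this to $V(\varphi)=V(\beta^+(U))$ yields $\vdash\varphi\leftrightarrow\beta^+(U)$, i.e.\ $\vdash\varphi\leftrightarrow\bigwedge_{\beta(w)\in\textsf{B}_\varphi}\beta(w)$, which is the first assertion.

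The two remaining claims are then immediate. A finite conjunction of $\NNIL$-formulas is again a $\NNIL$-formula, since the $\NNIL$ grammar is closed under $\wedge$, so $\beta^+(U)$ is a $\NNIL$-formula equivalent to $\varphi$; thus every $\mon$-formula is equivalent to a $\NNIL$-formula. Together with the inclusion $\NNIL\subseteq\mon$ of \Cref{nnil}, this shows that $\NNIL$ and $\mon$ coincide up to provable equivalence, i.e.\ the $\NNIL$-formulas are exactly the formulas reflected by monotonic maps. The one point that genuinely requires care—and the reason the whole section was needed—is that the universal-model argument must be carried out for the class $\mon$ rather than for $\NNIL$ alone: the input formula $\varphi$ is only assumed to be $\mon$ and is not a priori in $\NNIL$, so establishing that $\Tn$ is universal for all of $\mon$ (not merely $\NNIL$) is exactly what makes the replacement of $\varphi$ by $\beta^+(U)$ legitimate.
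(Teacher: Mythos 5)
Your proposal is correct and follows essentially the same route as the paper: take the upset $V(\varphi)$ in $\Tn$, define it by the finite conjunction $\beta^+(V(\varphi))$ from \Cref{lem:universal}(3), and upgrade equality of truth sets to provable equivalence via condition (i) of the universal model applied to the class $\mon$. The paper's own proof is just a compressed version of exactly this argument.
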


\begin{proof} 
Consider the upset $V(\varphi)$ in the universal model $\mathcal T(n)$. By (the proof of) \Cref{lem:universal}\eqref{lem:universald} there is a finite set $\textsf{B}_\varphi\subseteq \textsf{B}$ of \NNIL-subframe formulas such that  $T_u \models \bigwedge_{\beta(w) \in \textsf{B}_\varphi} \beta(w)$ iff $T_u \models \varphi$ for every $T_u \in \mathcal T(n)$. Thus $\vdash_{\mathsf{IPC}} \varphi \leftrightarrow \bigwedge_{\beta(w) \in \textsf{B}_\varphi} \beta(w)$ by the property of universal model. 
\end{proof}

Next we infer that the intermediate logics axiomatized by $\textsf{B}$-, $\mon$-, or $\NNIL$-formulas coincide. Therefore, by \Cref{subframe_log_thm}, formulas from one and any of the three classes are sufficient to axiomatize all subframe logics. 
\begin{corollary}\label{BmonNNIL-logics} For an intermediate logic $L$, 
the following are equivalent:
\begin{enumerate}
\item\label{B}  $L$ is axiomatizable by $\textsf{B}$-formulas;
\item $L$ is axiomatizable by $\NNIL$-formulas;
\item $L$ is axiomatizable by $\mon$-formulas;
\item $L$ is a subframe logic.
\end{enumerate}
\end{corollary}
\begin{proof}
By \Cref{subframe_log_thm}, (4) is equivalent to (2).
 Since $\mathsf{B} \subseteq \mathsf{NNIL} \subseteq \mon$, the implications $(3) \Rightarrow (2) \Rightarrow (1)$ are obvious. By \Cref{cor:betaaxiomatization} every $\mon$-formula is equivalent to a set of $\textsf{B}$-formulas, thus $(1)$ implies $(3)$ follows.
\end{proof}

This corollary allows us to complete the discussion from the ending of the previous section and
to conclude surprisingly easily to the well-known fact that subframe logics $L$ are {\em canonical}, i.e., the underlying Kripke frames of the canonical models of $L$ are $L$-frames. 
\todod{I changed the text a little, added the definition in a footnote (for referee 1 who may have assumed that canonical formula and canonical model have something to do with each other, but they don't) and added a line in the proof below.} \todof{I would prefer to move the footnote to the main text. Changed.}

\begin{corollary}
\label{cor:canonical}
The  class of frames of any subframe logic  is closed under taking substructures. In particular, subframe logics are canonical.  
\end{corollary}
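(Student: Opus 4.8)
The plan is to prove \Cref{cor:canonical} as a direct consequence of \Cref{cor:substructures} together with the characterization of subframe logics obtained in \Cref{BmonNNIL-logics}. First I would recall that by \Cref{BmonNNIL-logics}, every subframe logic $L$ is axiomatized by a set of $\textsf{B}$-formulas, say $L = \mathsf{IPC} + \{\beta(\N_i) \mid i \in I\}$ for some family of finite rooted models $\N_i$. The class of frames of $L$ is then exactly the class of (descriptive or Kripke) frames validating all the $\beta(\N_i)$.

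For the closure under substructures, I would argue pointwise on the axioms. Suppose $\F$ is a frame of $L$, so $\F \models \beta(\N_i)$ for every $i \in I$, and let $\gl$ be a substructure of $\F$. By \Cref{cor:substructures}, truth of each $\beta(\N_i)$ is preserved under substructures, so $\gl \models \beta(\N_i)$ for every $i \in I$. Hence $\gl$ is also a frame of $L$, which establishes that the class of frames of $L$ is closed under taking substructures.

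For canonicity, the key observation is that the canonical (descriptive) frame of $L$ sits as a substructure inside an appropriate Kripke frame. The plan is to use the standard fact that if a logic is determined by its canonical descriptive frame, and the underlying Kripke frame of that canonical frame (i.e.\ the same $(W,R)$ but with all upsets admissible) validates the logic, then $L$ is canonical. Concretely, the underlying Kripke frame of the canonical descriptive frame is a substructure of the latter in the sense defined just before \Cref{cor:substructures}, since it has the same domain $W$ and the same relation $R$, differing only in the collection of admissible sets. Applying the closure under substructures established above, the underlying Kripke frame validates all the $\beta(\N_i)$ and hence $L$, which is precisely canonicity.

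The main obstacle I expect is being careful about the direction of the substructure relation and the role of admissible sets: closure under substructures is a statement about validity transferring from a larger admissible-set structure to one on the same points with a possibly different (smaller or incomparable) Heyting algebra of upsets, and the remark following \Cref{cor:substructures} explicitly stresses that no inclusion between the admissible sets of $\gl$ and $\F$ is required. The delicate point is to invoke exactly this feature to pass from the canonical descriptive frame to its full Kripke frame, rather than trying to relate the two algebras of admissible sets directly; once that is framed correctly, canonicity follows immediately from the substructure closure and the standard characterization of canonicity via the underlying Kripke frame of the canonical frame.
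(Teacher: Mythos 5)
Your proposal is correct and follows essentially the same route as the paper: closure under substructures is obtained by applying \Cref{cor:substructures} to a $\textsf{B}$-axiomatization of $L$ provided by \Cref{BmonNNIL-logics}, and canonicity follows because the underlying Kripke frame of the canonical descriptive $L$-frame is a substructure (same domain and relation, with no condition imposed on admissible sets). Your added care about the direction of the substructure relation and the irrelevance of the admissible sets matches the remark the paper itself makes after \Cref{cor:substructures}.
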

\begin{proof}
The first part follows immediately from \Cref{cor:substructures} and the equivalence of items (1) and (4) in \Cref{BmonNNIL-logics}.\todod{ The (1) here is to make clear for the referee that the B-formulas are the point}\todof{I've added also (4) and rephrased it a little (if you read carefully what Cor. 4.13 says)}

For any subframe logic $L$, consider its canonical model. The descriptive frame of its definable upsets validates $L$. Since the underlying Kripke frame of this descriptive $L$-frame is obviously a substructure, it validates $L$, i.e., $L$ is canonical.  
\end{proof}

Finally, let us conclude our discussion on $n$-universal models $\Tn$ for \NNIL-formulas by showing that they are isomorphic to $n$-canonical models for $\NNIL$-formulas.
Let $\NNIL_n$ denote the class of all \NNIL-formulas in $n$ variables, and note that up to equivalence this class is finite since $\NNIL$-formulas are locally finite. 
Recall that the $n$-canonical mode $\M_{\NNIL_n}$ \todof{changed notation} 
for  $\NNIL_n$-formulas is constructed in the standard manner (as in the case of full {\sf IPC}):
Elements in $\M_{\NNIL_n}$ are the {\em consistent theories} of $\NNIL_n$-formulas (i.e., sets $\Gamma$ of $\NNIL_n$-formulas such that $\Gamma\nvdash\bot$, and $\Gamma\vdash\varphi$ implies $\varphi\in \Gamma$ for all $\varphi\in\NNIL_n$) with the {\em disjunction property} (i.e., $\varphi\vee\psi\in\Gamma$ implies either $\varphi\in\Gamma$ or $\psi\in\Gamma$), and the ordering is the set inclusion relation $\subseteq$. Observe that since \NNIL-formulas are locally finite, a theory $\Gamma$ of $\NNIL_n$-formulas is (modulo equivalence) a finite set $\Gamma_0$ or a single formula $\varphi=\bigwedge\Gamma_0$. \todof{added}
Elements of  $\M_{\NNIL_n}$ are thus actually the theories generated by the (single and consistent) $\NNIL_n$-formulas $\varphi$ with the {\em disjunction property} (i.e., $\varphi\vdash\psi\vee\chi$ implies $\varphi\vdash\psi$ or $\varphi\vdash\chi$). 
The $\NNIL_n$-formulas axiomatizing the theories in $\M_{\NNIL_n}$ are in fact the $\beta^+(w)$ formulas defined in \Cref{lem:universal}(\ref{lem:universalc}), as will follow from the next theorem.

\begin{theorem}\label{canonical_universal}
 $\M_{\NNIL_n}$ (with relation $\subseteq$)  is isomorphic to $\Tn$ (with relation $\leq$).
\end{theorem}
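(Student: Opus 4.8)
The plan is to write down the isomorphism explicitly and verify its properties. Define $\iota\colon\Tn\to\M_n^{\NNIL}$ by sending a node $T_w$ to its theory
\[
\Gamma_w:=\{\varphi\in\NNIL_n\mid T_w\models\varphi\},
\]
where $T_w\models\varphi$ is read in the sense fixed after \Cref{prop:properties Tn2}, so that evaluation at the root of the tree $T_w$ and at the node $T_w$ of $\Tn$ coincide. First I would check that $\iota$ really lands in $\M_n^{\NNIL}$: the set $\Gamma_w$ is closed under $\NNIL_n$-consequence by soundness, it is consistent since $T_w\not\models\bot$, and it has the disjunction property because $T_w\models\varphi\vee\psi$ forces $T_w\models\varphi$ or $T_w\models\psi$ directly from the Kripke clause for $\vee$. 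Thus every $\Gamma_w$ is a consistent theory with the disjunction property, i.e.\ an element of $\M_n^{\NNIL}$.

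The core step is that $\iota$ is an order-embedding, that is $T_w\leq T_u$ iff $\Gamma_w\subseteq\Gamma_u$. Reading $\models$ inside $\Tn$ via \Cref{prop:properties Tn2}, the forward direction is just persistence of truth in the Kripke model $\Tn$: if $T_w\leq T_u$ and $T_w\models\varphi$, then $T_u\models\varphi$, so $\Gamma_w\subseteq\Gamma_u$. For the converse I would use the formulas $\beta^+(w)$ from \Cref{lem:universal}\eqref{lem:universalc}: since $T_w\leq T_w$ we have $\beta^+(w)\in\Gamma_w$, so $\Gamma_w\subseteq\Gamma_u$ gives $\beta^+(w)\in\Gamma_u$, i.e.\ $T_u\models\beta^+(w)$, which by \Cref{lem:universal}\eqref{lem:universalc} is exactly $T_w\leq T_u$. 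Injectivity of $\iota$ is then immediate: $\Gamma_w=\Gamma_u$ yields $T_w\leq T_u$ and $T_u\leq T_w$, hence $T_w\equiv T_u$, whence $T_w=T_u$ by \Cref{prop:properties Tn}\eqref{b}.

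The main obstacle is surjectivity, and this is where the disjunction property carries the argument. Take an element of $\M_n^{\NNIL}$, i.e.\ the theory generated by a consistent $\varphi\in\NNIL_n$ with the disjunction property, and consider the upset $V(\varphi)$ in the finite model $\Tn$ with $\leq$-minimal nodes $T_{w_1},\dots,T_{w_m}$. Since $V(\beta^+(w_i))$ is precisely the set $\{T_v\mid T_{w_i}\leq T_v\}$ by \Cref{lem:universal}\eqref{lem:universalc}, the finite upset $V(\varphi)$ equals $\bigcup_{i=1}^m V(\beta^+(w_i))=V\bigl(\bigvee_{i=1}^m\beta^+(w_i)\bigr)$, so $\vdash\varphi\leftrightarrow\bigvee_{i=1}^m\beta^+(w_i)$ by the universal-model property \Cref{lem:universal}\eqref{lem:universalb}. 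The disjunction property of $\varphi$ now forces $\varphi\vdash\beta^+(w_j)$ for a single $j$; since every $T_{w_i}$ lies in $V(\varphi)\subseteq V(\beta^+(w_j))$ we get $T_{w_j}\leq T_{w_i}$, and minimality of the $T_{w_i}$ collapses the list to one node $T_w:=T_{w_j}$, giving $V(\varphi)=V(\beta^+(w))$ and $\vdash\varphi\leftrightarrow\beta^+(w)$. It then remains to identify the theory generated by $\beta^+(w)$ with $\Gamma_w$: for $\psi\in\NNIL_n$ one has $\beta^+(w)\vdash\psi$ iff $\{T_v\mid T_w\leq T_v\}=V(\beta^+(w))\subseteq V(\psi)$ iff $T_w\in V(\psi)$ (as $V(\psi)$ is an upset with least relevant point $T_w$) iff $\psi\in\Gamma_w$. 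Hence $\iota(T_w)$ is the prescribed theory, so $\iota$ is onto. Being a bijective order-embedding, $\iota$ is the desired order isomorphism between $(\Tn,\leq)$ and $(\M_n^{\NNIL},\subseteq)$.
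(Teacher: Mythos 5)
Your proof is correct, and it uses the same isomorphism $T_w\mapsto\mathsf{Th}(T_w)$ as the paper; the well-definedness check, the order-embedding argument (the paper phrases the converse direction via $\beta(u)$ and \Cref{lem:selfrefutetation} rather than $\beta^+(w)$, an inessential difference), and injectivity all match. Where you genuinely diverge is surjectivity. The paper picks representatives $\psi_1,\dots,\psi_k$ of all equivalence classes of $\NNIL_n$-formulas outside the given theory $\Phi$, uses the disjunction property to get $\Phi\nvdash\bigvee_i\psi_i$, and then invokes \Cref{lem:universal}\eqref{lem:universalb} to find a node verifying the axiom of $\Phi$ and refuting every non-member at once. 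You instead work inside the finite model $\Tn$: you decompose $V(\varphi)$ into the principal upsets above its minimal points, translate this into $\vdash\varphi\leftrightarrow\bigvee_i\beta^+(w_i)$, and let the disjunction property collapse the disjunction to a single $\beta^+(w)$. Both arguments trade on the same two resources (local finiteness and the disjunction property), but yours has the bonus of explicitly exhibiting each theory in $\M_n^{\NNIL}$ as the one axiomatized by a $\beta^+(w)$, which the paper only asserts in the paragraph preceding the theorem, while the paper's version avoids any analysis of the shape of $V(\varphi)$. One cosmetic remark: your collapse step implicitly needs $V(\varphi)\neq\emptyset$, i.e.\ $m\geq 1$; this follows from consistency of $\varphi$ via \Cref{lem:universal}\eqref{lem:universalb} applied with $\psi=\bot$, and is worth a half-sentence.
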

\begin{proof}
Define a function $f$ from $\Tn$ into $\M_{\NNIL_n}$ by taking
\[f(T_w)=\mathsf{Th}(T_w)=\{\varphi \in \NNIL_n \mid  T_w \models \varphi \}.\]
Each $\mathsf{Th}(T_w)$ clearly has the disjunction property, and thus $f$ is well-defined. Also, obviously $f$ preserves colors. By property (ii) of the universal model (\Cref{def:universalmodel}), the theories of two distinct points in $\Tn$ differ, and thus $f$ is injective.

To see that $f$ is onto take any $\Phi \in \M_{\NNIL_n}$, and let $\varphi$ axiomatize $\Phi$. Let $\psi_1,\dots,\psi_k$ contain a member of each equivalence class in  $\NNIL_n\setminus \Phi$. Clearly $\Phi\nvdash\psi_i$ for all $i$, as $\Phi$ is a theory.  It then follows that $\Phi\nvdash\bigvee_i\psi_i$, as $\Phi$ has the disjunction property. Now, by property (i) of the universal model (or \Cref{lem:universal}(1)), there is some node $T_w$ in $\Tn$ verifying $\varphi$ and refuting $ \bigvee_i\psi_i$. This means that $\Phi=\mathsf{Th}(T_w)=f(T_w)$, as desired.


It remains to show that $f$ is two ways order-preserving. Clearly $T_w  \leq T_u$ implies $\mathsf{Th}(T_w) \subseteq \mathsf{Th}(T_u)$. Conversely, if $T_w  \not \leq T_u$, by \Cref{betamono1} we have $T_w \models \beta(u)$. Meanwhile, by \Cref{lem:selfrefutetation} we have $T_u \not\models \beta(u)$. Hence $\mathsf{Th}(T_w) \not \subseteq \mathsf{Th}(T_u)$. 
\end{proof}

\section{Finite color-preserving submodels and the finite model property}\label{sec:fmp}
\label{sec:color-preservingsubmodels}

In this section, we give an alternative and direct proof that logics axiomatized by $\NNIL$-formulas (i.e., all subframe logics) have the finite model property. Our proof  is quite different from the previous proofs like the one of Theorem 11.20 in~\cite{CZ97}, which uses canonical formulas. We will introduce a procedure to reduce infinite models to finite submodels in a color-preserving manner. In case the infinite model is tree-like the finite reduced model is indistinguishable from it by $\mon$- or $\NNIL$-formulas. Our central argument then relies heavily on 
the result that logics axiomatized by $\NNIL$-formulas are preserved under substructures (\Cref{BmonNNIL-logics,cor:canonical}). 
In the proof we also make use of a  folklore frame-normal form of {\sf IPC}-formulas, for which we include in the paper a direct semantic proof. 

Let us start by defining the notion of color-preserving submodel that will play an important role in our construction.

\begin{definition} A submodel $\N=(W_0,R,V)$ of a model $\M=(W,R,V)$ is said to be \emph{color-preserving}, denoted $\N\subseteq_c\M$, if, for any $w\in W_0$ and $u\in W$, $wRu$ implies that there exists $v\in W_0$ such that $wRv$ and $col(v)=col(u)$. 
\end{definition}

The above notion
is related to the concept of \textit{relatively open} introduced by Ghilardi in~\cite{Ghi92}. A function $f : \N \to \M$ is relatively open with respect to $g$ (also $g$\textit{-open}) if, whenever $f(w) R u$, there exists $u'$ in $\N$ such that $w R u'$ and $g(f(u')) = g(u)$.
Observe that $\N\subseteq_c\M$
iff the inclusion map $i : \N \to \M$ is $col$-open. \todof{changed a little the phrasing} It would be possible to reformulate proofs in this section, especially the proof of Theorem \ref{fmp}, using the properties of relatively open maps.\todof{added}\todod{reformulated slightly}

In the next lemma we prove some basic facts concerning color-preserving submodels.

\begin{lemma}\label{color-pres_pm} 
\begin{enumerate}
\item If $\f{M}_0\subseteq_c\M_1$ and $\M_1\subseteq_c\M_2$, then $\M_0\subseteq_c\M_2$. 
  \item Let $\alpha$ be a p-morphism from $\M$ into  $\M'$. If $\N\subseteq_c\M$, then $\alpha[\N]\subseteq_c\M'$, where $\alpha[\N]$ is the image of $\N$ under $\alpha$.
\end{enumerate}
\end{lemma}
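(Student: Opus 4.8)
The plan is to prove both parts directly by unwinding the definition of color-preserving submodel, which requires, for every point $w$ in the smaller model and every $R$-successor $u$ of $w$ in the larger model, the existence of a successor $v$ of $w$ inside the smaller model with $col(v)=col(u)$.

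For part (1), transitivity, I would start with $w\in W_0$ (the domain of $\M_0$) and an arbitrary $u\in W_2$ with $wRu$. First I would apply $\M_0\subseteq_c\M_1$: since $w\in W_0\subseteq W_1$ and $wRu$ with $u\in W_2\supseteq W_1$ --- here one must be slightly careful, because the color-preservation witness is sought for successors in the \emph{ambient} model. The clean route is: view $u$ as a successor of $w$ in $\M_2$; apply $\M_1\subseteq_c\M_2$ first to obtain $v_1\in W_1$ with $wRv_1$ and $col(v_1)=col(u)$; then apply $\M_0\subseteq_c\M_1$ to the successor $v_1\in W_1$ of $w\in W_0$, yielding $v_0\in W_0$ with $wRv_0$ and $col(v_0)=col(v_1)=col(u)$. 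Since $\M_0$ carries the restricted relation, $wRv_0$ holds inside $W_0$, so $v_0$ is the required witness. This gives $\M_0\subseteq_c\M_2$.

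For part (2), let $\alpha:\M\to\M'$ be a p-morphism and $\N=(W_0,R,V)\subseteq_c\M$; write $\alpha[\N]$ for the submodel of $\M'$ on $\alpha[W_0]$. Take $\alpha(w)$ with $w\in W_0$ and a successor $u'$ of $\alpha(w)$ in $\M'$, so $\alpha(w)R'u'$. Here I would invoke the \textbf{forth condition} of the p-morphism: there is $u\in W$ with $wRu$ and $\alpha(u)=u'$. Now apply $\N\subseteq_c\M$ to the successor $u$ of $w$: there is $v\in W_0$ with $wRv$ and $col(v)=col(u)$. Then $\alpha(v)\in\alpha[W_0]$, and by order preservation $\alpha(w)R'\alpha(v)$, while color preservation of $\alpha$ gives $col(\alpha(v))=col(v)=col(u)=col(\alpha(u))=col(u')$. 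Thus $\alpha(v)$ witnesses color-preservation for the successor $u'$ of $\alpha(w)$, establishing $\alpha[\N]\subseteq_c\M'$.

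The main obstacle, such as it is, is bookkeeping rather than depth: in part (1) one must apply the two hypotheses in the correct order (lifting $u$ back through $\M_1$ before descending into $W_0$), and in part (2) the crucial move is recognizing that the p-morphism's forth condition is exactly what is needed to pull the successor $u'$ of $\alpha(w)$ back to a genuine successor $u$ of $w$, to which the color-preserving hypothesis on $\N$ applies; color and order preservation of $\alpha$ then transport the witness forward. No nontrivial induction or case analysis is required.
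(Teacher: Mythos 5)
Your proof is correct and matches the paper's: the paper declares part (1) standard and omits it (your two-step composition is the intended argument), and your proof of part (2) --- pulling the successor $u'$ back via the forth condition, applying color-preservation of $\N$ in $\M$, and pushing the witness forward using order- and color-preservation of $\alpha$ --- is exactly the argument given there.
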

\begin{proof}
The proofs for both items are standard. We only give the proof of item (2). Let $\M=(W,R,V)$, $\M'=(W',R',V')$ and $\N=(W_0,R,V)$.
Let $w\in W_0$ and $u'\in W'$ be such that $\alpha(w)R'u'$. Since $\alpha$ is a p-morphism, there exists $u\in W$ such that $\alpha(u)=u'$ and $wRu$. By assumption, $\N$ is a color-preserving submodel of $\M$, thus there exists $v\in W_0$ such that $wRv$ and $col(v)=col(u)$. It follows that for $\alpha(v)$ in $\alpha[\N]$, we have  $\alpha(w)R'\alpha(v)$ and $col(\alpha(v))=col(v)=col(u)=col(\alpha(u))=col(u')$, as required. 
\end{proof}

We now proceed to reduce an infinite model to a finite one in a color-preserving manner in a number of steps. In the first two steps we do this with trees, first reducing the depth of a tree-like model to finite, and in the next step the width. In the third step, in 
Theorem~\ref{color_fm}, we adapt this to infinite models in general, connect it with color-preserving monotonic maps, and show that validity of {\sf MR}- and \NNIL-formulas is preserved by the operation.\todod{Added some sentences for the referee}\todof{It is fine, if this is what (s)he wants. Removed ``the basic", do not understand it...}\todod{added a tiny bit more}


\begin{lemma}\label{colorreduction}
Every tree-like $n$-model $\M$ has a tree-like color-preserving submodel $\N$ of finite depth with the same root. 
\end{lemma}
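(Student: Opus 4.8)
The plan is to bound the depth of a color-preserving submodel by $n+1$, exploiting the fact that there are only $n+1$ possible values for the number of distinct colors appearing in an upset. Since $\M$ is tree-like, each point $w$ has a unique chain of predecessors from the root $r=w_0R w_1R\cdots R w_m=w$; along any such chain the set of colors occurring weakly above each point is nondecreasing in size as we descend (by persistency, $wRu$ gives $col(w)\leq col(u)$, so colors only grow going up, i.e. the set of colors realized in $R(w)$ can only shrink as $w$ moves up the chain). More precisely, for a point $w$ let $c(w)=|\{col(u)\mid wRu\}|$ be the number of distinct colors realized in the generated subtree $\M_w$. Persistency forces $1\le c(w)\le n+1$, and $wR^+u$ implies $c(u)\le c(w)$.

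First I would prune the tree by selecting, at each point, only enough successors to witness all the colors currently present, doing so in a way that strictly decreases $c$ whenever we pass to a proper successor whose color-count is the same — the idea being to collapse long stretches where $c$ does not drop. Concretely, I would define $\N$ by keeping the root and, inductively, for a retained point $w$ keeping a minimal finite set of successors of $w$ that realizes every color in $\{col(u)\mid wRu\}$ and such that along retained edges the quantity $c$ strictly decreases or the color strictly increases. The color-preserving condition is exactly the requirement that for every $w\in W_0$ and every $u$ with $wRu$ there is a retained $v$ with $wRv$ and $col(v)=col(u)$; this is guaranteed by always witnessing every realizable color at each retained node. That $\N$ is a submodel with the same root and is tree-like is immediate from the construction, since we only delete points and keep the induced order, and the tree structure of $\M$ is inherited by any subset containing the relevant initial segments.

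The key point is finiteness of depth: I would argue that along any retained chain the pair (strictly, the value $c(w)$, refined by $col(w)$) cannot repeat indefinitely. Since $c$ takes values in $\{1,\dots,n+1\}$ and is nonincreasing going up, and since whenever $c$ stays constant across a retained edge the color must strictly increase (there are at most $n+1$ colors in a chain as colors strictly increase along it by persistency, $n$ coordinates each flipping from $0$ to $1$ at most once), any retained chain has length at most bounded by a function of $n$. This bounds the depth of $\N$. The main obstacle I anticipate is making the selection of witnesses coherent across the tree so that color-preservation holds at \emph{every} retained node simultaneously while still forcing the depth bound; the delicate part is ensuring that the pruning that witnesses all colors at $w$ does not secretly reintroduce long color-constant chains. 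I would handle this by phrasing the retention rule as: keep $w$ only if it is a minimal point (in its predecessor chain) realizing the pair $(c(w), \text{the color-set of } \M_w)$, i.e. retain a point exactly when descending to it from its retained predecessor strictly decreases the color-count or is forced to witness a new color not yet witnessed, and verify color-preservation and the depth bound $\le n+1$ from this rule by induction on depth.
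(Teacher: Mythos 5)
Your overall strategy --- delete points so that along every surviving chain some finitely-valued monotone quantity must strictly change, and keep enough witnesses at each surviving point to preserve colors --- is essentially the strategy of the paper's proof. But as written the argument has two genuine problems. First, the quantitative claim underpinning your depth bound is false: for $c(w)=|\{col(u)\mid wRu\}|$ you assert $1\le c(w)\le n+1$, but persistency only bounds the number of distinct colors along a single \emph{chain} by $n+1$; the upset $R(w)$ as a whole can realize up to $2^n$ colors (for instance, all of them above a root of color $0\cdots 0$). So the bound $n+1$ you announce does not follow from your argument; only finiteness of the depth survives, with $2^n$ in place of $n+1$. Second, and more importantly, you never fix a single well-defined retention rule and never carry out the color-preservation check: you state several variants of the rule, you yourself identify the coherence problem (that locally chosen witness sets may assemble into chains on which neither $c$ drops nor the color increases), and you then defer its resolution to a ``verification by induction on depth'' that is not given. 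That verification is the entire content of the lemma, so as it stands this is a plan rather than a proof.

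The fix is to drop the color-count machinery altogether; the disjunct ``the color strictly increases'' is the part of your rule that does all the work. The paper takes $W_0=\{r\}\cup\{w\mid col(w_0)<col(w)\}$, where $w_0$ is the unique immediate predecessor of $w$ in the tree. Every chain in the resulting submodel is strictly increasing in color, so its depth is at most $n+1$. Color-preservation is then a short direct argument with no coherence issue to manage: given a retained $w$ and $u$ with $wRu$, if $col(u)=col(w)$ take $v=w$; otherwise the finite chain $R(w)\cap R^{-1}(u)$ has nondecreasing colors, and its first element $v$ with $col(v)=col(u)$ satisfies $col(v_0)<col(v)$, hence $v\in W_0$. (Your final rule, read as ``retain $w$ iff the color-set of $R(w)$ is strictly contained in that of $R(w_0)$,'' in fact retains a superset of this $W_0$ and can be made to work via the same first-witness argument, but the depth bound it yields is $2^n+1$ rather than $n+1$, and the verification you owe is exactly the one above.)
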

\begin{proof}
Let $\M=(W,R,V)$ and let $r$ be the root. Since $\M$ is a tree, every node $w$ above $r$ has a unique  immediate predecessor that we denote by $w_0$. Let $\N$ be the submodel of $\M$ on the set 
\[W_0=\{r\}\cup\{w\in W\mid col(w_0)<col(w)\}.\]
The model $\N$ has finite depth since all chains in $\N$ are strictly increasing in color and there are only finitely many $n$-colors. 

It remains to check that $\N$ is a color-preserving submodel of $\M$. For any $w\in W_0$  and $u\in W$ such that $wRu$, since $col(w_0)<col(w)\leq col(u)$, there must exist a node $v$ in the finite set $R(w)\cap R^{-1}(u)$ such that $col(v_0)<col(v)=col(u)$. Clearly, $v\in W_0$ and $wRv$. 
\end{proof}

Next, we prune a tree of finite depth further to one of finite width, and thus obtain a finite tree.

\begin{lemma}
\label{iso_remove}
Every tree-like $n$-model $\M$ of finite depth has a finite tree-like color-preserving submodel $\N$ with the same root.
\end{lemma}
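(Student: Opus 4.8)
The plan is to argue by induction on the depth of the finite-depth tree $\M=(W,R,V)$, rooted at $r$. The base case, depth $1$, is immediate: then $R(r)=\{r\}$, so $\M$ is the single node $r$ and is already finite. For the inductive step, I would let $\{u_i\}_{i\in I}$ enumerate the immediate successors of $r$ (of which there may be infinitely many), and let $\M_{u_i}$ be the subtree generated by $u_i$. Each such subtree has depth strictly smaller than that of $\M$, so the induction hypothesis gives a finite color-preserving submodel $\N_i\subseteq_c\M_{u_i}$ with the same root $u_i$.

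The key auxiliary observation is that a color-preserving submodel with the same root realizes exactly the same colors as the ambient model: since $u_i\in\N_i$ and every $u\in\M_{u_i}$ satisfies $u_iRu$, color-preservation of $\N_i\subseteq_c\M_{u_i}$ produces a node $v\in\N_i$ with $col(v)=col(u)$, so the colors occurring in $\N_i$ coincide with those occurring in $\M_{u_i}$. Because there are at most $2^n$ colors, the set $\bigcup_{i\in I}\{col(u)\mid u\in\M_{u_i}\}$ of colors appearing strictly above $r$ is finite, and I can therefore choose a finite $J\subseteq I$ with $\bigcup_{j\in J}\{col(u)\mid u\in\M_{u_j}\}$ already exhausting all of them. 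I then take $\N$ to be the submodel on $\{r\}\cup\bigcup_{j\in J}\N_j$, obtained by placing the finitely many finite trees $\N_j$ ($j\in J$) below the common root $r$; this is finite and tree-like with root $r$.

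It remains to verify $\N\subseteq_c\M$, which I would do by splitting on the position of the lower node $w$. If $w=r$ and $rRu$ for some $u\in W$, then $col(u)$ is one of the finitely many colors occurring above $r$, hence realized in some $\N_j$ by the choice of $J$ together with the auxiliary observation; this yields $v\in\N_j$ with $rRv$ and $col(v)=col(u)$. If instead $w\neq r$, then $w$ lies in a unique $\N_j$, and since $\M$ is a tree every $u$ with $wRu$ still lies in $\M_{u_j}$; color-preservation of $\N_j\subseteq_c\M_{u_j}$ then supplies the required $v\in\N_j\subseteq W_0$. These two cases exhaust $W_0$.

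The step I expect to be the main obstacle is the root case: discarding all but finitely many immediate successors of $r$ threatens to destroy color-preservation at $r$, since $r$ has successors inside every dropped branch. The resolution is exactly the finiteness of the color set, which lets a finite choice of retained branches cover every color occurring anywhere above $r$, while the auxiliary observation guarantees that pruning a retained branch to $\N_j$ loses none of its colors. The deeper cases are comparatively routine, handled directly by the induction hypothesis because in a tree the successors of a retained node never escape its generated subtree.
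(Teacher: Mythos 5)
Your proof is correct, but it takes a genuinely different route from the paper's. You induct top-down on the depth: apply the induction hypothesis to each branch above the root, observe that a color-preserving submodel with the same root realizes exactly the colors of the ambient tree, and then retain only finitely many branches, enough to cover the finitely many colors occurring above the root. The paper instead prunes bottom-up, layer by layer: above each node of depth $i+1$ it keeps one subtree from each isomorphism class of the (already pruned) subtrees generated by that node's immediate successors, finiteness following because there are only finitely many isomorphism types of such pruned trees of bounded depth. Both arguments prove the lemma as stated, and yours is arguably the more economical one. The difference does matter downstream, however: the proof of \Cref{color_fm} invokes not merely the statement of \Cref{iso_remove} but its construction, using the retained isomorphic copies to define the maps $g_i$ that witness $\N$ as a monotonic image of $\M$. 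Your color-based selection does not guarantee this stronger property. For instance, take a root $r$ of color $00$ with three branches: a leaf $a$ of color $01$, a leaf $b$ of color $11$, and a two-element chain $c_1Rc_2$ with $col(c_1)=01$ and $col(c_2)=11$. Your procedure may legitimately return $\N=\{r,a,b\}$, which is a finite color-preserving submodel with the same root, yet admits no monotonic map from $\M$ into it: $c_1$ would have to be sent to $a$, and $c_2$ to a successor of $a$ of color $11$, which does not exist. So if your proof were substituted for the paper's, the second half of \Cref{color_fm} would require a separate argument.
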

\begin{proof}
Assume that $\M$ is of depth $k$. We inductively select a sequence of color-preserving submodels $\N_{k}\subseteq_c\dots\subseteq_c\N_2\subseteq_c\N_1=\M$ from $\M$ such that in each $\N_i$, there are only finitely many non-isomorphic subtrees generated by every node of depth $i$. This way each layer of $\N_k$ and thus $\N_k$ itself will be finite. 

To construct $\N_2$ consider the nodes $w$ of depth $2$. Each such $w$ has only maximal nodes as its immediate successors. These immediate successors have at most $2^n$ color types, and of each color type we keep only one immediate successor of $w$ and remove all the others. Clearly, after the reduction there are only finitely many non-isomorphic subtrees generated by the nodes $w$ of depth $2$, as these $w$ can have at most $2^n$ distinct colors.


Now assume that $\N_i$ has been defined. We construct $\N_{i+1}$ by deleting some subtrees. Consider an arbitrary node $w$ of depth $i+1$. By induction, there are only finitely many non-isomorphic subtrees generated by the immediate successors of $w$. Of each such isomorphism type we keep only one subtree above $w$ and remove all the others. Clearly, after the reduction there are only finitely many non-isomorphic subtrees generated by the nodes $w$ of depth $i+1$, as, again, these $w$ can have at most $2^n$ distinct colors.

Finally, we verify that  $\N_k$ is a color-preserving submodel of $\M$ by proving that $\N_{i+1}=(W_{i+1},R,V)$ is a color-preserving submodel of $\N_i=(W_i,R,V)$ for each $i$. Suppose $w\in W_{i+1}$, $u\in W_i$ and $wRu$. If $u\in W_{i+1}$, then we are done. Otherwise, $u$ is in a subtree $T$ of $\N_i$ that is missing in $\N_{i+1}$. By the construction there
remains an isomorphic copy of $T$ in $\N_{i+1}$ above $w$ and the node corresponding to $u$ in this isomorphic copy will have the same color as $u$.  
 So we are also done.
\end{proof}

The construction in this lemma is quite close to constructing the bisimulation quotient of $\mathfrak{M}$ but here it is relativized to trees. A similar construction occurs already in~\cite{deJ68}.\todod{Added this remark.}

 \todod{Keep this sentence?}\todof{It can be kept}\todod{Threw it out after all, there is enough before Lemma 5.3}
\begin{theorem}\label{color_fm}
Every rooted $n$-model $\M$ has a finite color-preserving submodel $\N$ with the same root.

In addition, if $\M$ is tree-like, then $\N\leq_f\M$ for some surjective map $f$, 
and so $(\M,r)\simeq_{\mon}(\N,r)$ and $(\M,r)\simeq_{\NNIL}(\N,r)$. \todof{rephrased using the new notation}
\end{theorem}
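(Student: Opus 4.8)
The plan is to combine the two reduction lemmas and then handle the tree-like addendum via unraveling. For the first sentence, I would proceed as follows. Given an arbitrary rooted $n$-model $\M$ with root $r$, I would first pass to its standard unraveling $\M_{\textsf t}$ with the identified root $r$. The natural map $\alpha$ from $\M_{\textsf t}$ onto $\M$ is a surjective p-morphism, and $\M_{\textsf t}$ is tree-like. Applying \Cref{colorreduction} to $\M_{\textsf t}$ yields a tree-like color-preserving submodel of finite depth with the same root, and then applying \Cref{iso_remove} to that model yields a finite tree-like color-preserving submodel $\N'$ of $\M_{\textsf t}$ with the same root; by \Cref{color-pres_pm}(1), $\N'\subseteq_c\M_{\textsf t}$. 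Finally I would push $\N'$ back down into $\M$ using \Cref{color-pres_pm}(2): the image $\alpha[\N']$ is a color-preserving submodel of $\M$, and it is finite since $\N'$ is. Since $\alpha$ preserves the (identified) root, $\alpha[\N']$ has the same root $r$ as $\M$. This gives the desired finite color-preserving submodel $\N:=\alpha[\N']$.

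For the tree-like addendum, when $\M$ is already tree-like I would skip the unraveling entirely and simply set $\N:=\N'$, i.e.\ apply \Cref{colorreduction} followed by \Cref{iso_remove} directly to $\M$, obtaining a finite color-preserving submodel $\N\subseteq_c\M$ with the same root. The key extra claim to establish is that $\N$ is a monotonic image of $\M$, i.e.\ that there is a monotonic (color-preserving, order-preserving) surjection $\M\to\N$. The natural candidate is the map that sends each node of $\M$ to the ``corresponding'' node retained in $\N$: in each pruning step of \Cref{iso_remove}, a deleted subtree $T$ is replaced by a retained isomorphic copy, so one folds each deleted node onto its isomorphic counterpart in the kept copy, and similarly for the depth-reduction of \Cref{colorreduction} one collapses the gap. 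I would define this retraction $g:\M\to\N$ explicitly by following the two constructions and checking it is the identity on $W(\N)$, color-preserving by the isomorphisms involved, and order-preserving because the subtree isomorphisms respect $R$.

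The main obstacle I anticipate is verifying monotonicity of this retraction across the depth reduction of \Cref{colorreduction}, rather than across the width pruning of \Cref{iso_remove}. The width pruning is clean because it replaces subtrees by genuinely isomorphic copies, so the folding map is locally an isomorphism and monotonicity is immediate. The depth reduction, however, deletes nodes $w$ with $col(w_0)=col(w)$ (those not strictly increasing in color over their predecessor), and the retained witness $v$ with $wRv$, $col(v)=col(u)$ lies strictly above $w$; mapping a deleted node $u$ to such a $v$ must be done so that the relation $R$ is preserved along chains and so that it is compatible with the subsequent width pruning. I would handle this by defining the retraction as a single map directly onto the final $\N$, sending each $u\in W(\M)$ to the unique retained node reachable by ``skipping'' color-repeating steps and then choosing isomorphic representatives, and then checking order preservation chain by chain. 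Once monotonicity and surjectivity of $g$ are in hand, $(\M,r)\simeq_{\mon}(\N,r)$ follows from \Cref{nnil} together with the fact that $\N$ is a submodel (so the identity embedding is monotonic the other way), giving monotonic maps in both directions; and $(\M,r)\simeq_{\NNIL}(\N,r)$ then follows since $\NNIL\subseteq\mon$.
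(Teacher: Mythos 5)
Your proposal is correct and follows essentially the same route as the paper: unravel, apply \Cref{colorreduction} and then \Cref{iso_remove}, push the resulting finite tree down along the natural p-morphism using \Cref{color-pres_pm}, and in the tree-like case build the monotonic surjection as a retraction combining the two pruning steps. The obstacle you anticipate in the depth-reduction step is resolved in the paper exactly as you suggest: each node $w$ is sent to the deepest retained node on the chain below it (which has the same color as $w$, making the map color-preserving and order preservation immediate along chains), and this map is then composed with the isomorphism-folding maps coming from the width pruning.
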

\begin{proof}
We construct $\N$ in stages. First unravel $\M$ to obtain a tree-like model $\M_{\textsf{t}}$ with the same root. Second, apply Lemma \ref{colorreduction} to $\M_{\textsf{t}}$ to obtain a tree-like color-preserving submodel of finite depth with the same root. Lemma \ref{iso_remove} then gives a finite tree-like color-preserving submodel $\N_0$ with the same root. Then, by  Lemma~\ref{color-pres_pm}(1), $\N_0$ is a color-preserving submodel of $\M_{\textsf{t}}$. Let $\alpha$ be the natural p-morphism from $\M_{\textsf{t}}$ onto $\M$. By \Cref{color-pres_pm}(2), the image $\N=\alpha[\N_0]$  of the finite model $\N_0$ under $\alpha$ is a finite color-preserving submodel of $\M$. Since $\alpha$ maps the root of $\M_{\textsf{t}}$ to the root of $\M$, $\N$ and $\M$ have the same root.

Now, suppose in addition that $\M$ is tree-like. Then $\N$ can be obtained directly from $\M$ by subsequently applying \Cref{colorreduction} and \Cref{iso_remove}. We show that $\mathfrak N$ is also a monotonic image of $\M$. 

Let $\N_1$ be the model of depth $k$ obtained from $\M$ as in \Cref{colorreduction}. For each $w$ in $\M$, by the construction (and using the same notation) there is a predecessor  $w'$ of $w$ in $\N_1$ such that  $col(w'_0)<col(w')= col(w)$. Clearly the map $g$ from $\M$ into $\mathfrak N_1$ defined as $g(w)=w'$ is monotonic, color-preserving and onto.

Let $\N=\N_k\subseteq_c\dots\subseteq_c\N_2\subseteq_c\N_1$ be the sequence of models as constructed in \Cref{iso_remove}. We define maps $g_i: \mathfrak N_i \rightarrow \mathfrak N_{i+1}$ for every $1\leq i\leq k-1$ as follows. The map $g_i$ sends a subtree that is removed in the construction to its isomorphic copy that is kept in $\mathfrak N_{i+1}$. Each $g_i$ is clearly monotonic and onto (and in fact it is a p-morphism). Finally, the composition $g_{k-1}\circ\dots\circ g_1\circ g$ is a color-preserving monotonic map from $\M$ onto $\N$.
%

Lastly, together with the fact that the identity map from the submodel $\N$ into $\M$ is monotonic, we conclude that $(\M,r)\simeq_{\mon}(\N,r)$ and $(\M,r)\simeq_{\NNIL}(\N,r)$ by  definition of the class $\mon$ and \Cref{nnil} for $\NNIL$-formulas.
\end{proof}

As a simple application of the above theorem, we can show that \Cref{Tn_turth_thm} holds in case $\mathfrak T$ is an infinite tree as well, because $\mathfrak T$ can be reduced to a finite tree-like submodel $\mathfrak T_0$ that is a monotonic image of $\mT$, in particular, $\mT_0\equiv\mT$. 

In the rest of the section we will prove the finite model property of logics axiomatized by $\NNIL$-formulas as a consequence of \Cref{color_fm}.
Our argument also uses the fact that each {\sf IPC}-formula $\varphi$ can be brought into a frame-normal form of implication complexity $\leq 2$. This result seems to be more or less folklore, although a closely related form is used in~\cite{Sta79,Jerabek16} where syntactic proofs are given. We give, instead, a semantic proof of this fact in the following.

Let us first define the frame-normal form. Given any formula $\varphi$, for each variable $p$ and constant $\bot$ occurring in $\varphi$ we let $s_p=p$ and $s_\bot=\bot$, and for each compound subformula $\psi$ of $\varphi$ we introduce a fresh variable $s_{\psi}$.  

\begin{definition}
Define inductively formulas $\varphi'_+$ 
as follows:
\begin{itemize}
\item If $\varphi=p$, then define $\varphi'_+=\top$. 
\item If $\varphi=\bot$, then define $\varphi'_+=\top$. 
\item 
If $\varphi=\psi\ast\chi$ for $\ast\in\{\wedge,\vee,\to\}$, then define
\[\varphi'_+=\psi'_+\wedge\chi'_+\wedge \big((s_\psi\ast s_\chi)\leftrightarrow s_{\varphi}\big).
\]
\end{itemize}
Define $\varphi'=\varphi'_+\to s_{\varphi}$. \todof{changed as suggested by referee 2}
\end{definition}

Observe that most conjuncts in $\varphi'_+$ are \NNIL-formulas, except for subformulas of the form $(s_{\psi}\to s_{\chi})\to s_{\psi\to\chi}$. 
We  now show that $\varphi$ and $\varphi'$ are frame-equivalent to each other, and thus the formula $\varphi'$ can be viewed as a normal form for {\sf IPC}-formulas over frames.

\begin{proposition}\label{jankov_form_lm}
For any frame $\F$, we have that $\F\models\varphi\iff\F\models\varphi'$.
\end{proposition}
\begin{proof}
To prove the proposition, we first prove the following claim.

\begin{claim}[ 1]
For any formula $\varphi$, any model $\M$ and any node $w$ in $\M$, we have that
\(\M,w\models\varphi'_+\Longrightarrow \M,w\models \varphi\leftrightarrow s_\varphi.\)
\end{claim}
\begin{proofclaim}[ 1]
We prove the claim by induction on $\varphi$. If $\varphi=p$ or $\bot$, then $s_\varphi=\varphi$  by definition, thus the claim holds trivially.

Suppose $\varphi=\psi\ast\chi$ for $\ast\in\{\wedge,\vee,\to\}$. Assume that $\M,w\models\varphi'_+$, i.e., $\M,w\models \psi'_+\wedge\chi'_+\wedge((s_\psi\ast s_\chi)\leftrightarrow s_\varphi)$. By the induction hypothesis, $\M,w\models \psi\leftrightarrow s_\psi$ and $\M,w\models \chi\leftrightarrow s_\chi$, implying $\M,w\models (\psi\ast\chi)\leftrightarrow (s_\psi\ast s_\chi)$. Since $\M,w\models (s_\psi\ast s_\chi)\leftrightarrow s_\varphi$, we obtain $\M,w\models  (\psi\ast\chi)\leftrightarrow s_\varphi$, as required. 
\end{proofclaim}

Now, to prove the direction ``$\Longrightarrow$'' of the proposition, it suffices to prove that 
\(\M,w\models\varphi\Longrightarrow \M,w\models\varphi'\)
holds for any model $\M$ and any node $w$ in $\M$. Suppose $\M,w\models\varphi$ and $\M,u\models\varphi'_+$ for some successor $u$ of $w$. By Claim 1, $\M,u\models \varphi\leftrightarrow s_\varphi$, thus $\M,u\models s_\varphi$, thereby $\M,w\models\varphi'$.

For the direction ``$\Longleftarrow$'',  suppose $(\F,V),w\not\models\varphi$ for some valuation $V$ on $\F$ and $w$ in $\F$. Let $V'$ be a valuation on $\F$  such that $V'(s_{\psi})=V(\psi)$ for every subformula $\psi$ of $\varphi$. 

\begin{claim}[ 2] $(\F,V')\models \varphi'_+$.
\end{claim}
\begin{proofclaim}[ 2]
We prove the claim by induction on the subformulas $\psi$ of $\varphi$. If $\psi=p$ or $\bot$, then $\psi'_+=\top$ and the claim holds trivially. Suppose $\psi=\theta\ast \chi$ for $\ast\in\{\wedge,\vee,\to\}$. Then $\psi'_+=\theta'_+\wedge\chi'_+\wedge ((s_\theta\ast s_\chi)\leftrightarrow s_\psi)$. By the induction hypothesis, we have that $(\F,V')\models\theta'_+\wedge\chi'_+$. Moreover, by the definition, $V'(s_\theta)=V(\theta)$, $V'(s_\chi)=V(\chi)$ and $V(\psi)=V'(s_{\psi})$, which by a simple inductive argument imply that $V'(s_\theta\ast s_\chi)=V(\theta\ast\chi)=V'(s_\psi)$. Thus $(\F,V')\models (s_\theta\ast s_\chi)\leftrightarrow s_\psi$. 
\end{proofclaim}

To complete the proof we need to show that $(\F,V')\not\models\varphi'$, which can be reduced to showing that $(\F,V'),w\not\models\varphi'_+\to s_\varphi$. By Claim 2, we have that $(\F,V'),w\models \varphi'_+$. It then follows from Claim 1 that $(\F,V'),w\models \varphi\leftrightarrow s_\varphi$. Since $V'$ and $V$ agree on the valuation of all propositional variables occurring in $\varphi$, 
the assumption $(\F,V),w\not\models\varphi$  implies that $(\F,V'),w\not\models\varphi$, which gives $(\F,V'),w\not\models s_\varphi$, as desired.
\end{proof}

%


Finally, we are in a position to prove the finite model property for logics axiomatized by $\NNIL$- or $\mon$-formulas.

\begin{theorem}\label{fmp} 
If $L$ is axiomatized by $\mathsf{NNIL}$- or $\mon$-formulas then $L$ has the finite model property.  
\end{theorem}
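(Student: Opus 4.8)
The plan is to argue contrapositively and build, from any non-theorem $\varphi\notin L$, a \emph{finite} $L$-frame that refutes $\varphi$. First I would use \Cref{BmonNNIL-logics} to regard $L$ as a subframe logic, and \Cref{cor:canonical} to conclude that $L$ is canonical; hence $L$ is complete with respect to its Kripke $L$-frames, so there is a rooted Kripke frame $\F\models L$ carrying a model that refutes $\varphi$. By the frame-normal form \Cref{jankov_form_lm}, $\F$ then also refutes $\varphi'=\varphi'_+\to s_\varphi$, so I may fix a model $\M=(\F,V)$ and, passing to a generated submodel, take a \emph{root} $u$ with $\M,u\models\varphi'_+$ and $\M,u\not\models s_\varphi$; thus $\M,u\not\models\varphi'$.

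Next I would apply \Cref{color_fm} to the rooted model $\M$ to obtain a finite color-preserving submodel $\N\subseteq_c\M$ with the same root $u$. Since the underlying frame of $\N$ is a substructure of $\F$, it validates $L$ by the substructure-closure part of \Cref{cor:canonical}. It then remains only to verify that $\N,u\not\models\varphi'$: once this is shown, the finite frame of $\N$ refutes $\varphi'$, hence $\varphi$ again by \Cref{jankov_form_lm}, and the finite model property follows.

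The heart of the argument is therefore to push the refutation of $\varphi'$ down to the common root $u$. As $s_\varphi$ is a variable, $\N,u\not\models s_\varphi$ is immediate from color preservation, so it suffices to show $\N,u\models\varphi'_+$, i.e.\ that every conjunct of $\varphi'_+$ holding at $u$ in $\M$ still holds at $u$ in $\N$. For the \NNIL-conjuncts this is automatic, since the inclusion $\N\hookrightarrow\M$ is a monotonic map and \Cref{nnil} transfers their truth at $u$. The only other conjuncts are those of the shape $(s_\psi\to s_\chi)\to s_{\psi\to\chi}$, which are not \NNIL; here color-consistency is used decisively. I would first establish, directly from the definition of $\subseteq_c$, the equivalence that for every $w\in W_0$ and all variables $s_\psi,s_\chi$ one has $\N,w\models s_\psi\to s_\chi$ iff $\M,w\models s_\psi\to s_\chi$ --- the nontrivial direction replacing a refuting successor $x$ of $w$ in $\M$ by a successor $y\in W_0$ of the same color, whose existence is exactly the color-preserving condition. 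Granting this, if $\N,v\models s_\psi\to s_\chi$ for some $v\geq u$ in $\N$ (so $v\in W_0$), then $\M,v\models s_\psi\to s_\chi$, whence $\M,v\models s_{\psi\to\chi}$ because $\M,u\models(s_\psi\to s_\chi)\to s_{\psi\to\chi}$, and therefore $\N,v\models s_{\psi\to\chi}$ since it is a variable. This yields $\N,u\models(s_\psi\to s_\chi)\to s_{\psi\to\chi}$.

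The main obstacle is precisely this last step: color-preserving submodels do \emph{not} preserve arbitrary formulas, which is exactly why reducing $\varphi$ to its frame-normal form of implication complexity $\leq 2$ is indispensable. The color-preserving condition is tailored so that implications between variables survive in both directions at points of $W_0$, and this is what carries the lone non-\NNIL conjuncts across to the finite submodel. I would isolate the displayed equivalence as a short claim and then simply assemble the conjuncts as above.
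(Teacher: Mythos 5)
Your proposal is correct and follows essentially the same route as the paper: frame-normal form, a finite color-preserving submodel with the same root, substructure-closure to keep the frame an $L$-frame, monotonicity of the inclusion for the \NNIL-conjuncts, and the color-preserving condition for the conjuncts $(s_\psi\to s_\chi)\to s_{\psi\to\chi}$. The only quibbles are cosmetic: the paper works directly on a rooted descriptive $L$-frame rather than first invoking canonicity to pass to a Kripke frame, and what you call ``color-consistency'' in the key step is really the color-\emph{preserving} submodel condition (the paper reserves ``color-consistent'' for maps into frames).
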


\begin{proof}
Assume that $L\,{\nvdash}\, \varphi$ for some formula $\varphi$, and by Proposition~\ref{jankov_form_lm}, we may further assume that $\varphi$ is in the frame-normal form $\varphi'_+\to s$ with $n$ propositional variables. Then $\varphi$ is falsified on an $n$-model $\M$ on a rooted descriptive $L$-frame $\F$. 
Let $\N$ be the finite color-preserving submodel of $\M$ with the same root given by \Cref{color_fm}. 
The underlying frame $\mathfrak G$ of $\mathfrak N$ is obviously a substructure of $\mathfrak M$, and thus $\mathfrak G$ is an $L$-frame by  \Cref{cor:canonical}.

It remains to show that $\N$ falsifies $\varphi=\varphi'_+\to s$. Assume w.l.o.g.\ that the root $v$ of $\M$ makes $\varphi'_+$ true and $s$ false. 
By the construction,  $v$ is also the root of $\N$, and $\N,v\not\vDash s$. It remains to prove that $\N,v\models\varphi'_+$. As pointed out already, most conjuncts in $\varphi'_+$ are \NNIL-formulas, and thus remain true in the submodel $\N$. It is left to check that $v$ makes the formulas of the form $(p\to q)\to r$ true in $\N$. Assuming that $w$ is a node in $\N$ such that $\N,w\not\models r$, we need to show that $\N,w\not\models p\to q$. Now, since $\M,w\models (p\to q)\to r$ and $\M,w\not\models r$, we have $\M,w\not\models p\to q$, so there must exist a successor $u$ of $w$ in $\M$ such that $\M,u\models p$ and $\M,u\not\models q$. Since $\N$ is a color-preserving submodel of $\M$, there is a successor $u_0$ of $w$ in $\N$ such that $\N,u_0\models p$ and $\N,u_0\not\models q$, which implies that $\N,w\not\models p\to q$, as required.
\end{proof}

\section{Open problems 
}

In the above we hope to have brought more clarity to the role of \NNIL-formulas, both in models and in frames. We think this opens up a number of possibilities for future research. We enumerate some of them.


(1). In~\cite{gool} the 
$[\wedge,\to]$-fragment of \textsf{IPC} was studied using finite duality for distributive lattices and universal models leading to results about how the universal model for that fragment fits into the overall universal model of {\sf IPC}, to results about interpolation, and to the relationship of the subframe formulas connected to that fragment with the Jankov-de Jongh formulas. A similar investigation of the \NNIL-fragment seems indicated, and should also throw light on the intriguing relationship between those two fragments. 

(2). A  clear goal for research will be a characterization of the subclass of those modal subframe logics that are closed under arbitrary substructures in the same way that all  intermediate subframe logics are. Such logics obviously exist, is a syntactic characterization too much to hope for?


(3). In~\cite{Fan08} the 2-universal model was used to initiate a study of subframe logics axiomatized by \NNIL-formulas with 2  variables, for example towards the construction of characteristic frames.  This study can be continued and extended to 3 variables using the 3-universal model.


(4). In~\cite{BdJ15} {\sf ONNILLI}-formulas were introduced, which are strongly related to \NNIL-formulas. The universal models for \NNIL-formulas may, either directly be useful for the study of {\sf ONNILLI}-formulas and the stable logics they axiomatize, or indirectly, in the construction of their own universal models.

(5). Construction of the concrete 3-universal model (as far as it goes) with computer assistance may well increase insight in more-variable \NNIL-formulas.

(6). Construction of a syntactically defined class of formulas preserved under cofinal submodels extending \NNIL and study of its properties, and construction of universal models. Presumably such a class should contain the cofinal subframe formulas of~\cite{Nic06,Nic08}.

(7). It seems worthwhile to investigate the concept of universal model in general for locally finite fragments of {\sf IPC} and other logics. 




\section*{Funding}

The third author was supported by grants 308712 and 330525 of the Academy of Finland, and  Research Funds of the University of Helsinki. 

\section*{Acknowledgments}

The authors would like to thank Nick Bezhanishvili for useful discussions on parts of the paper, and two anonymous referees for valuable suggestions. They also thank Silvio Ghilardi for showing them the connection  to the concept of relatively open, which was also pointed out by one of the referees. 

\bibliographystyle{plain}

\end{document}